\theoremstyle{plain}
\newtheorem{theorem}{\bf Theorem}[section]
\newtheorem{proposition}[theorem]{\bf Proposition}
\newtheorem{lemma}[theorem]{\bf Lemma}
\newtheorem{corollary}[theorem]{\bf Corollary}
\theoremstyle{definition}
\newtheorem{remark}[theorem]{\bf Remark}
\newcommand{\N}{\mathbb N}
\newcommand{\Z}{\mathbb Z}
\newcommand{\R}{\mathbb R}
\newcommand{\BF}{\text{\rm BF}}
\newcommand{\ACC}{\text{\rm ACC  }}
\newcommand{\red}{{\text{\rm red}}}
\numberwithin{equation}{section}
\begin{document}

\title[Sets of lengths]{Sets of lengths  in \\ atomic unit-cancellative finitely presented monoids}

\address{Institute for Mathematics and Scientific Computing\\ University of Graz, NAWI Graz\\ Heinrichstra{\ss}e 36\\ 8010 Graz, Austria }
\email{alfred.geroldinger@uni-graz.at}
\urladdr{http://imsc.uni-graz.at/geroldinger}

\address{Department of Mathematical Sciences \\ University of Texas at El Paso \\ 500 W. University Ave \\ El Paso, Texas 79968-0514, USA }
\email{eschwab@utep.edu}
\urladdr{http://www.math.utep.edu/Faculty/schwab/}

\author{Alfred Geroldinger and Emil Daniel Schwab}

\thanks{This work was supported by the Austrian Science Fund FWF, Project Number P28864-N35}

\keywords{sets of lengths, unions of sets of lengths, finitely presented monoids, M\"obius monoid}

\subjclass[2010]{20M13, 20M05, 13A05}

\begin{abstract}
For an element $a$ of a monoid $H$, its set of lengths $\mathsf L (a) \subset \N$ is the set of all positive  integers $k$ for which there is a factorization $a=u_1 \cdot \ldots \cdot u_k$ into $k$ atoms. We study the system $\mathcal L (H) = \{\mathsf L (a) \mid a \in H \}$ with a focus on the unions $\mathcal U_k (H) \subset \N$ which are the unions of all sets of lengths containing a given $k \in \N$. The Structure Theorem for Unions -- stating that for all sufficiently large $k$, the sets $\mathcal U_k (H)$ are almost arithmetical progressions with the same difference and global bound -- has found much attention for commutative monoids and domains. We show that it holds true for the not necessarily commutative monoids in the title satisfying suitable algebraic finiteness conditions. Furthermore, we give an explicit description of the system of sets of lengths of monoids $B_{n} = \langle a,b \mid  ba=b^{n} \rangle$  for  $n \in \N_{\ge 2}$. Based on this description, we  show that the monoids $B_n$ are not transfer Krull, which implies that their systems $\mathcal L (B_n)$ are distinct from systems of sets of lengths of commutative Krull monoids and others.
\end{abstract}

\maketitle

%%%%%%%%%%%%%%%%%%%%%%%%%%%%%%%%%%%%%%%%%%%%%%%%%%%%%%%%%%%%%%%%%%%%%%%%%
%%                                      %%%%%%%%%%%%%%%
%%%%%%%%%%%%%%%%%%%%%%%%%%%%%%%%%%%%%%%%%%%%%%%%%%%%%%%%%%%%%%%%%%%%%%%%%
\bigskip
\section{Introduction} \label{1}
\bigskip

By an atomic unit-cancellative monoid, we mean an associative semigroup with unit element where every non-unit can be written as a finite product of atoms (irreducible elements) and where  equations of the form $au=a$ or $ua=a$ imply that $u$ is a unit. Let $H$ be an atomic unit-cancellative monoid. If $a = u_1 \cdot \ldots \cdot u_k$, with $a \in H$ and atoms $u_1, \ldots, u_k \in H$, then $k$ is called a factorization length of $a$ and the set $\mathsf L (a) \subset \N$ of all possible factorization lengths is called the set of lengths of $a$. The system $\mathcal L (H) = \{\mathsf L (a) \mid a \in H \}$ of all sets of lengths (for convenience, one defines $\mathsf L (a) = \{0\}$ for units $a \in H$) is a well-studied means of describing the non-uniqueness of factorizations of $H$.  If there is an $a \in H$ with $|\mathsf L (a)|>1$, say $k, \ell \in \mathsf L (a)$, then for all $N \in \N$, $\mathsf L (a^N) \supset \{(N-i)k + i \ell \mid i \in [0,N]\}$ and hence $\mathsf |\mathsf L (a^N)|>N$. Thus $\mathcal L (H)$   either consists of singletons only or it contains arbitrarily large sets. For every $k \in \N$, let $\mathcal U_k (H)$ denote the set of all $\ell \in \N$ for which there is an equation of the form  $u_1 \cdot  \ldots \cdot  u_k = v_1 \cdot \ldots \cdot v_{\ell}$ with atoms $u_1, \ldots, u_k,v_1, \ldots , v_{\ell} \in H$. Thus $\mathcal U_k (H)$ is the union of all sets of lengths containing $k$.

Systems of sets of lengths, together with all invariants derived from them, such as unions, sets of distances,  and more, are well-studied invariants in factorization theory. For a long time the focus was on commutative and cancellative monoids which mainly stem from ring theory, such as monoids of non-zero elements of integral domains or monoids of invertible ideals (see \cite{An97, Ge-HK06a, Fo-Ho-Lu13a, C-F-G-O16}).
Recently, first steps were made to study the arithmetic of commutative but not necessarily cancellative monoids (see \cite{Ch-An13a, Ch-Sm13a, F-G-K-T17}).

Although various concepts of unique factorization in non-commutative rings have been studied for decades (see \cite{Sm16a} for a  survey), a systematic investigation  of arithmetic phenomena in non-commutative rings has been started only in recent years (\cite{Sm13a, Ba-Ba-Go14, Ba-Sm15, Ba-Je16a,  Be-He-Le17, Ba-Sm18}). In many of these papers the authors construct so-called (weak) transfer homomorphisms from the non-commutative ring $R$ under consideration to a commutative monoid $H$  such that many arithmetical phenomena of $R$ and $H$ coincide and, in particular, $\mathcal L (R)= \mathcal L (H)$ holds. To mention one of these deep results explicitly, consider a bounded hereditary noetherian prime ring $R$: if every stably free left $R$-ideal is free, then there are a commutative Krull monoid $H$ and a weak transfer homomorphism $\theta \colon R \to H$ implying  that $\mathcal L (R) = \mathcal L (H)$ (\cite[Theorem 4.4]{Sm17a}).

In the present note we study atomic unit-cancellative monoids which are finitely presented. Under mild algebraic finiteness conditions, we show that all unions $\mathcal U_k (H)$ are finite and that, apart from  globally bounded initial and end parts, they are arithmetical progressions. Thus $\mathcal L (H)$ satisfies the Structure Theorem for Unions (see Lemma \ref{3.1} and Theorem \ref{3.6}).

Section \ref{4} is devoted to the monoids $B_{n} = \langle a,b \mid  ba=b^{n} \rangle$  for  $n \in \N_{\ge 2}$.  They are M\"obius monoids anti-isomorphic to the semigroups $S_{2,n}$  from \cite{Sh89a} with a unit element adjoined, playing an important role in  M\"obius inversion and in the study of monoids with one defining relation generating varieties of finite axiomatic rank (\cite{Sh89a, Sc-Ha08a, Schw15a, Schw16a}). It is easy to see that the monoids $B_n$ are atomic, unit-cancellative, and right-cancellative, but not left-cancellative. We provide an explicit description of the system $\mathcal L (B_n)$  and of the unions $\mathcal U_k (B_n)$ for all $k \ge 2$  (Theorem \ref{4.2} and Corollary \ref{4.3}). Such explicit descriptions are very rare in the literature (cf. \cite[Corollary 16]{Ch-Sm13a}, \cite{Ge-Sc-Zh17b}). They allow us to show that there is no weak transfer homomorphism $\theta \colon B_n \to H$ where $H$ is any commutative Krull monoid (Corollary \ref{4.4}).  Furthermore, we prove that the system $\mathcal L (B_n)$ is closed under set addition and hence $\mathcal L (B_n)$ is a reduced atomic unit-cancellative monoid with set addition as operation (Theorem \ref{4.5}).

\bigskip
\section{Preliminaries} \label{2}
\bigskip

We denote by $\N$ the set of positive integers. For a real number $x \in \R_{\ge 0}$, we denote by $\lfloor x \rfloor \in \N_0$ the largest integer which is smaller than or equal to $x$. For  $a, b \in \Z$, $[a,b] = \{x \in \Z \mid a \le x \le b\}$ denotes the discrete interval. Let $L, L'  \subset \Z$ be subsets of the integers. Then $L+L' = \{\ell + \ell' \mid \ell \in L, \ell' \in L'\}$ is their sumset with $m+L = \{m\}+L$ for every $m \in \Z$. For $d \in \N$, $d \cdot L = \{ da \mid a \in L \}$ denotes the {\it dilation} of $L$ by $d$. Thus, for $d \in \N, q \in \N_0$, and $m \in \Z$, $m+ d \cdot [0, q] = \{m, m+d, \ldots, m+qd \}$ is an arithmetical progression with difference $d$.
A positive integer $d \in \N$ is said to be a distance of $L$ if there is an $a \in L$ such that $[a, a+d] \cap L = \{a, a+d\}$ and $\Delta (L) \subset \N$ is the {\it set of distances} of $L$. If $L \subset \N_0$, then the {\it elasticity} $\rho (L)$ is defined as $\rho (L) = \sup (L \cap \N)/\min (L  \cap \N) \in \mathbb Q_{\ge 1} \cup \{\infty\}$ if $L \cap \N \ne \emptyset$ and $\rho (L)=1$ if  $L \cap \N = \emptyset$.

Let $\mathcal L$ be a family of subsets of $\N_0$. For each $k \in \N$, we set
\[
\mathcal U_k ( \mathcal L) = \bigcup_{k \in L, L \in \mathcal L } \ L \quad \subset \N  \quad \text{and} \quad \rho_k ( \mathcal L ) = \sup \, \mathcal U_k (\mathcal L) \,.
\]
Furthermore, we call
\begin{itemize}
\item $\Delta ( \mathcal L ) = \bigcup_{L \in \mathcal L} \Delta (L) \subset \N$ \ the {\it set of distances} of $\mathcal L$,

\smallskip
\item $\rho ( \mathcal L) = \sup \{\rho (L) \mid L \in \mathcal L \} \in \R_{\ge 1} \cup \{\infty\}$ \ the {\it elasticity} of $\mathcal L $, and
\end{itemize}
we say that $\mathcal L$ has {\it accepted elasticity} if there is some $L \in \mathcal L$ such that $\rho (L) = \rho (\mathcal L) < \infty$.

\smallskip
By a {\it monoid} we mean an associative semigroup with a unit element. If not stated otherwise, we use multiplicative notation. Let $H$ be a monoid with unit element $1_H=1$. We denote by $H^{\times}$ the group of invertible elements of $H$ and say that $H$ is reduced if $H^{\times} = \{1\}$. We say that $H$ is (left and right) unit-cancellative if the following two properties are satisfied:
\begin{itemize}
\item  If $a, u \in H$ and $a = au$, then $u \in H^{\times}$,

\item  If $a, u \in H$ and $a = ua $, then $u \in H^{\times}$.
\end{itemize}
Clearly, every cancellative monoid is unit-cancellative.
Unit-cancellativity is a frequently studied property, by many authors and under  many different names (the corresponding concept in ring theory is called pr\'esimplifiable; it was introduced by Bouvier and further studied by D.D. Anderson et al. \cite{An-VL96a, An-Al17a}).
An element $u \in H$ is said to be {\it irreducible} (or an {\it atom}) if $u \notin H^{\times}$ and an equation $u = ab $ with $a, b \in H$ implies that $a \in H^{\times}$ or $b \in H^{\times}$. We denote by $\mathcal A (H)$ the set of atoms of $H$ and we say that $H$ is atomic if every non-unit can be written as a finite product of atoms of $H$.

For a set $P$, let $\mathcal F^* (P)$ be the free monoid with basis $P$ and let $\mathcal F (P)$ be the free abelian monoid with basis $P$. We denote by $|\cdot | \colon \mathcal F^* (P) \to \N_0$ the homomorphism mapping each word onto its length. Similarly, if
\[
a = \prod_{p \in P}p^{\mathsf v_p (a)} \in \mathcal F (P), \quad \text{where} \quad \mathsf v_p \colon \mathcal F (P) \to \N_0 \quad \text{is the $p$-adic exponent} \,,
\]
then $|a|=\sum_{p \in P} \mathsf v_p (a) \in \N_0$ is the length of $a$.
Let $D$ be a monoid. A submonoid $H \subset D$ is said to be {\it saturated} if $a \in D$, $b \in H$, and ($ab \in H$ or $ba \in H$) imply that $a \in H$ (instead of saturated also the terms {\it full} or {\it grouplike} are used, see \cite[page 64]{Je-Ok07a}). A commutative monoid $H$ is Krull if its associated reduced monoid is a saturated submonoid of a free abelian monoid (\cite[Theorem 2.4.8]{Ge-HK06a}).

Let $H$ be an atomic unit-cancellative monoid. If $a = u_1 \cdot \ldots \cdot u_k \in H$, where $k \in \N$ and $u_1, \ldots, u_k \in \mathcal A (H)$, then $k$ is called a factorization length and
\[
\mathsf L_H (a) = \mathsf L (a) = \{ k \in \N \mid a \ \text{has a factorization of length} \ k \} \subset \N
\]
is called the {\it set of lengths} of $a$. For a unit $\epsilon \in H^{\times}$ we set $\mathsf L (\epsilon) = \{0\}$. Then
\[
\mathcal L (H) = \{ \mathsf L (a) \mid a \in H \}
\]
denotes the {\it system of sets of lengths} of $H$. We say that $H$ is
\begin{itemize}
\item a {\it \BF-monoid} (a bounded factorization monoid) if $\mathsf L (a)$ is finite and nonempty for all $a \in H$,

\item {\it half-factorial} if $|\mathsf L (a)|=1$ for all $a \in H$.
\end{itemize}
If $H$ is not half-factorial, then there is an $a \in H$ with $|\mathsf L (a)|>1$ whence $\mathsf L (a^n) \supset \mathsf L (a) + \ldots + \mathsf L (a)$ has more than $n$ elements for every $n \in \N$. We denote by
\begin{itemize}
\item $\mathcal U_k (H) = \mathcal U_k \big( \mathcal L (H) \big)$ for every $k \in \N$  the {\it union of sets of lengths} (containing $k$), by

\smallskip
\item $\Delta (H) = \Delta \big( \mathcal L (H) \big)$  {\it the set of distances} of $H$, and by

\smallskip
\item $\rho (H) = \rho \big( \mathcal L (H) \big)$ is the {\it elasticity} of $H$.
\end{itemize}

\bigskip
\section{Reduced atomic  unit-cancellative monoids} \label{3}
\bigskip

In this section we study sets of lengths of reduced atomic unit-cancellative  monoids.  Under suitable additional algebraic finiteness conditions we show that their system of sets of lengths satisfies the Structure Theorem for Unions (Theorem \ref{3.6}).
To start with, we recall some concepts needed to formulate the Structure Theorem for Unions.

A non-empty subset $L \subset \N_0$ is called an {\it almost arithmetical progression} (AAP for short) if
\[
L \subset \min L + d \Z \quad \text{and} \quad L \cap [M+ \min L, -M + \sup L] \ \text{is an arithmetical progression with difference} \ d \,,
\]
where $d \in \N$,  $M \in \N_0$, and with the conventions that arithmetical progressions are non-empty and that $[M+ \min L, -M + \sup L] = \N_{\ge M+ \min L}$ if $L$ is infinite.
A family $\mathcal L$ of subsets of $\N_0$ is said to
\begin{itemize}
\item be {\it directed} if $1 \in L$ for some $L \in \mathcal L$ and, for all $L_1, L_2 \in \mathcal L$, there is $L^\prime \in \mathcal L$ with $L_1 + L_2 \subset L^\prime$;

\item satisfy the {\it Structure Theorem for Unions} if there are $d \in \N$ and $M \in \N_0$ such that $\mathcal U_k (\mathcal L)$ is an AAP with difference $d$ and bound $M$ for all sufficiently large $k \in \N$.
\end{itemize}

\smallskip
The next result provides a characterization of when the Structure Theorem for Unions holds in the setting of directed families of subsets of the non-negative integers.

\smallskip
\begin{lemma} \label{3.1}
Let $\mathcal L$ be a  directed family of subsets of $\N_0$ such that $\Delta(\mathcal L)$ is finite nonempty and set $d = \min \Delta(\mathcal L)$.
Let $\ell \in \N$ such that $\{\ell, \ell + d\} \subset L$ for some $L \in \mathcal L$. Then $q = \frac{1}{d} \max \Delta(\mathcal L)$  is a non-negative integer and the following statements are equivalent{\rm \,:}
\begin{enumerate}
\item[(a)] $\mathcal L$ satisfies the  Structure Theorem for Unions.
\item[(b)] There exists $M \in \N$ such that \ $\mathcal U_k (\mathcal L) \cap [ \rho_{k - \ell q}(\mathcal L) + \ell q, \rho_k (\mathcal L) - M ]$ is either empty or an arithmetical progression with difference $d$ for all sufficiently large $k$.
\end{enumerate}
\end{lemma}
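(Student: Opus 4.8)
The plan is to first record the closure and monotonicity properties of the assignment $k \mapsto \mathcal{U}_k(\mathcal{L})$ that are forced by directedness, and then to prove both implications by locating the interval appearing in (b) relative to the ``middle part'' of an almost arithmetical progression. Throughout put $\rho_k = \rho_k(\mathcal{L})$ and $\lambda_k = \min \mathcal{U}_k(\mathcal{L})$. Since $1 \in L_0$ for some $L_0 \in \mathcal{L}$, iterating the sumset property produces, for each $k$, a member of $\mathcal{L}$ containing $\underbrace{L_0 + \cdots + L_0}_{k}$ and hence $k$, so $k \in \mathcal{U}_k(\mathcal{L})$ and $\lambda_k \le k \le \rho_k$; likewise $\mathcal{U}_i(\mathcal{L}) + \mathcal{U}_j(\mathcal{L}) \subset \mathcal{U}_{i+j}(\mathcal{L})$, so $(\rho_k)_k$ is superadditive and $(\lambda_k)_k$ is subadditive. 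Next one checks $\Delta(\mathcal{L}) \subset d\N$: the $m$-fold sumset of a set $L \ni \ell, \ell+d$ yields an $L_m \in \mathcal{L}$ with $m\ell + d\cdot[0,m] \subset L_m$; adding to $L_m$ a member of $\mathcal{L}$ witnessing a distance $d' \in \Delta(\mathcal{L})$ and intersecting with a suitable interval exhibits two elements of a member of $\mathcal{L}$ within $d' \bmod d$ of one another, a positive distance $< d$ unless $d \mid d'$, contradicting $d = \min\Delta(\mathcal{L})$. Hence $q = \tfrac1d \max\Delta(\mathcal{L}) \in \N$; moreover every $L \in \mathcal{L}$ lies in a single residue class mod $d$, so $\mathcal{U}_k(\mathcal{L}) \subset k + d\Z$ for all $k$. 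Finally $j\ell + d\cdot[0,j] \subset \mathcal{U}_{j\ell}(\mathcal{L})$ for all $j \ge 1$, whence
\[
\mathcal{U}_k(\mathcal{L}) \ \supset\ \mathcal{U}_{k-j\ell}(\mathcal{L}) + \bigl( j\ell + d\cdot[0,j] \bigr) \qquad (k \ge j\ell),
\]
so $\rho_k \ge \rho_{k-j\ell} + j(\ell+d)$, and in particular $\rho_k - k \to \infty$.

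For the implication (a) $\Rightarrow$ (b), assume $\mathcal{U}_k(\mathcal{L})$ is an AAP with difference $d'$ and bound $M'$ for all $k \ge k_0$. From $\mathcal{U}_k(\mathcal{L}) \subset k + d\Z$ one gets $d \mid d'$, while the displayed inclusion puts arbitrarily long arithmetical progressions of difference $d$ into $\mathcal{U}_k(\mathcal{L})$; these meet the middle part $[\lambda_k + M', \rho_k - M']$ once $k$ is large, forcing $d' \mid d$, so $d' = d$. For large $k$ we have $\rho_{k-\ell q} + \ell q - \lambda_k \ge \rho_{k-\ell q} - (k-\ell q) \to \infty$, hence $[\rho_{k-\ell q}+\ell q, \rho_k - M'] \subset [\lambda_k + M', \rho_k - M']$, and intersecting the middle arithmetical progression with this interval gives an arithmetical progression of difference $d$ or the empty set; so (b) holds with $M = \max\{M', 1\}$. (If $\rho_k = \infty$ for large $k$, the same reasoning applies, reading the intervals via the convention $[M + \min L, -M + \sup L] = \N_{\ge M + \min L}$.)

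For the implication (b) $\Rightarrow$ (a), the heart of the matter is a ``no gaps'' statement: there is a constant $C$ such that, for all large $k$,
\[
\mathcal{U}_k(\mathcal{L}) \cap \bigl[ \lambda_k + C,\ \rho_{k-\ell q} + \ell q \bigr] \ =\ (k + d\Z) \cap \bigl[ \lambda_k + C,\ \rho_{k-\ell q} + \ell q \bigr].
\]
Granting this, the right-hand side is an arithmetical progression of difference $d$ which already meets $[\rho_{k-\ell q}+\ell q, \rho_k - M]$, so the set furnished by (b) on that interval is nonempty, hence an arithmetical progression of difference $d$; the two overlap at $\rho_{k-\ell q}+\ell q \in \mathcal{U}_k(\mathcal{L})$, so $\mathcal{U}_k(\mathcal{L}) \cap [\lambda_k + C, \rho_k - M]$ is an arithmetical progression of difference $d$, and together with $\mathcal{U}_k(\mathcal{L}) \subset k + d\Z$ and the bounded tail $\mathcal{U}_k(\mathcal{L}) \cap (\rho_k - M, \rho_k]$ this exhibits $\mathcal{U}_k(\mathcal{L})$ as an AAP with difference $d$ and bound $\max\{C, M+1\}$, which is (a). The ``no gaps'' statement I would prove by induction on $k$ interlocked with (b): in the displayed inclusion with $j = q$, each $y \in \mathcal{U}_{k-\ell q}(\mathcal{L})$ contributes to $\mathcal{U}_k(\mathcal{L})$ the block $y + \ell q + d\cdot[0,q]$ of $q+1$ consecutive residues mod $d$; on the range where $\mathcal{U}_{k-\ell q}(\mathcal{L})$ is, by the inductive hypothesis, already an arithmetical progression of difference $d$, its successive elements differ by $d \le qd = \max\Delta(\mathcal{L})$, so consecutive blocks overlap and cover, without gaps mod $d$, an interval reaching from near $\lambda_{k-\ell q}+\ell q$ (which is $\ge \lambda_k$ by subadditivity) up past $\rho_{k-\ell q}+\ell q$.

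The value $q = \tfrac1d \max\Delta(\mathcal{L})$ is calibrated exactly so that a translated block of length $q+1$ bridges the largest distance occurring in $\mathcal{L}$, which is why it appears in the statement. The step I expect to be the main obstacle is the uniform bookkeeping inside the ``no gaps'' argument: fixing a single constant $C$ (and base of the induction) that works for all large $k$; ensuring that the inductively clean part of $\mathcal{U}_{k-\ell q}(\mathcal{L})$ reaches high enough that, after the translation by $\ell q$, it overlaps the interval supplied by (b) rather than leaving a bounded gap near the top --- which may require invoking the displayed inclusion for several lengths $j \ge q$, or an eventual regularity statement for the sequences $(\rho_k)_k$ and $(\lambda_k)_k$ --- and covering the short range between $\lambda_k$ and $\lambda_{k-\ell q}+\ell q$, where the inductive guarantee is not directly available. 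Everything else reduces to the soft closure algebra recorded in the first step.
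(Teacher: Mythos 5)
First, a caveat on the comparison: the paper does not prove Lemma \ref{3.1} at all; it cites \cite[Section 2]{F-G-K-T17}, so your attempt can only be judged on its own merits. Your preliminary closure algebra is correct ($k\in\mathcal U_k(\mathcal L)$, $\mathcal U_i(\mathcal L)+\mathcal U_j(\mathcal L)\subset\mathcal U_{i+j}(\mathcal L)$, $\Delta(\mathcal L)\subset d\N$ so $q\in\N$ and $\mathcal U_k(\mathcal L)\subset k+d\Z$, the block inclusion $\mathcal U_{k-j\ell}(\mathcal L)+j\ell+d\cdot[0,j]\subset\mathcal U_k(\mathcal L)$), and the direction (a)$\Rightarrow$(b) is essentially sound, modulo writing out that the middle part of the AAP contains two elements (e.g.\ $k,k+d$) for large $k$ so that the AAP difference is forced to equal $d$.

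The genuine gap is in (b)$\Rightarrow$(a), precisely at the spot you flag but misjudge as a ``short range''. Your block argument, with or without induction, only produces elements of $\mathcal U_k(\mathcal L)$ that are $\ge \lambda_{k-\ell q}+\ell q$ (every element it constructs has the form $x+\ell q+id$ with $x\in\mathcal U_{k-\ell q}(\mathcal L)$), so it can never certify membership in the range $[\lambda_k+C,\ \lambda_{k-\ell q}+\ell q)$; and the length $(\lambda_{k-\ell q}+\ell q)-\lambda_k$ of that range is in general unbounded in $k$, so it cannot be absorbed into the AAP bound. In fact, over the range it does reach, no induction is needed: since each $L\ni k-\ell q$ has gaps at most $qd$, one gets directly $\mathcal U_k(\mathcal L)\supset(k+d\Z)\cap[\lambda_{k-\ell q}+\ell q,\rho_{k-\ell q}+\ell q]$. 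The missing idea for the lower end is the symmetry $j\in\mathcal U_k(\mathcal L)\iff k\in\mathcal U_j(\mathcal L)$: the bottom of $\mathcal U_k(\mathcal L)$ is governed by the tops of the unions $\mathcal U_j(\mathcal L)$ for $j<k$, which is exactly the information Condition (b) supplies, and one must combine it with growth estimates such as $\rho_{j+1}(\mathcal L)\ge\rho_j(\mathcal L)+1$ (to bound $\min\{j\mid\rho_j(\mathcal L)\ge k+M+qd\}-\lambda_k$) to see that the exceptional $j$'s stay within a bounded distance of $\lambda_k$. Without this transfer your key ``no gaps'' equality on $[\lambda_k+C,\rho_{k-\ell q}+\ell q]$ is unproved --- indeed it is essentially equivalent to the hard half of the lemma --- and the case where some $\rho_j(\mathcal L)=\infty$ (where (b) is vacuous, yet (a) must still be deduced) hinges on the same mechanism, so the argument as proposed does not close.
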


A proof of Lemma \ref{3.1} together with a variety of properties of directed families and of consequences of the Structure Theorem for Unions can be found in \cite[Section 2]{F-G-K-T17}. In particular, we recall that if the Structure Theorem holds and $\Delta (\mathcal L)$ is nonempty, then the unions are AAPs with difference $\min \Delta (\mathcal L)$. We now analyze Condition (b) in Lemma \ref{3.1}. If there is some $\ell \in \N$ such that $\rho_{\ell} (\mathcal L)=\infty$, then $\rho_k (\mathcal L)=\infty$ for all $k \ge \ell$ whence the  intersection given in Condition (b) is empty. Suppose that $\rho_k(\mathcal L) < \infty$ for all $k \in \N$. If there is an $M \in \N$ such that $\rho_{k}(\mathcal L) - \rho_{k-1}(\mathcal L) \le M$ for all $k \in \N_{\ge 2}$, then Condition (b) holds (for details see \cite[Section 2]{F-G-K-T17}).

\smallskip
Let $H$ be an atomic unit-cancellative monoid. If $u \in \mathcal A (H)$, then $\mathsf L (u) = \{1\}$. If $a, b \in H$, then $\mathsf L (a) + \mathsf L (b) \subset \mathsf L (ab)$. Thus, if $H \ne H^{\times}$, then $\mathcal L (H)$ is a directed family and  the characterization given in Lemma \ref{3.1} applies to the system of sets of lengths $\mathcal L (H)$.

Two elements $a, b \in H$ are said to be associated (we write $a \simeq b$) if $a \in H^{\times} b H^{\times}$. Suppose that $aH^{\times} = H^{\times} a$ for all $a \in H$. Then being associated is a congruence relation on $H$ and $H_{\red} = H/\simeq$ is a reduced atomic monoid. For every $a \in H$, we have $[a]_{\simeq} = aH^{\times}$ and $\mathsf L_H (a) = \mathsf L_{H_{\red}} (aH^{\times})$ whence $\mathcal L (H) = \mathcal L (H_{\red})$. We will formulate all our results for reduced atomic monoids but they also apply to non-reduced monoids as outlined in Remark \ref{3.7}.2.

\smallskip
We now suppose that $H$ is  reduced  atomic and unit-cancellative. We define its factorization monoid as the monoid of formal products of atoms and distinguish between the commutative and the non-commutative case. Thus we call
\[
\mathsf Z (H) = \begin{cases}
                \mathcal F^* ( \mathcal A (H)),  & \  \text{if $H$ is non-commutative} \\
                \mathcal F ( \mathcal A (H)), &  \ \text{if $H$ is commutative} \\
                \end{cases}
\]
the {\it factorization monoid}   of $H$. Then $\pi \colon \mathsf Z (H) \to H$ denotes the canonical epimorphism. If $a \in H$, then $\mathsf Z (a) = \pi^{-1} (a) \subset \mathsf Z (H)$ is the {\it set of factorizations} of $a$ and $\mathsf L (a) = \{ |z| \mid z \in \mathsf Z (a) \} \subset \N_0$ is the set of lengths of $a$, as introduced above. We say that $H$ is an FF-{\it monoid} (a finite factorization monoid) if $\mathsf Z (a)$ is finite  and non-empty for every $a \in H$. Note that every reduced atomic unit-cancellative FF-monoid is a M\"obius monoid (see \cite[Section 2.1]{Schw15a}).
The {\it monoid of relations} is defined as
\[
\sim_H \ = \{ (x,y) \in \mathsf Z (H) \times \mathsf Z (H) \mid \pi (x) = \pi (y) \} \,,
\]
and a {\it distance} on $H$ is a map $\mathsf d \colon \sim_H \to \N_0$ satisfying the following properties for all $z,z', z'' \in \ \sim_H$:
\begin{itemize}
\item[{\bf (D1)}] $\mathsf d (z,z)=0$,
\item[{\bf (D2)}] $\mathsf d (z,z')=\mathsf d (z',z)$,
\item[{\bf (D3)}] $\mathsf d (z,z') \le \mathsf d (z,z'')+\mathsf d (z'',z')$,
\item[{\bf (D4)}] $\mathsf d (xz, xz')=\mathsf d (zy, z'y)= \mathsf d (z,z')$ for all $x,y$, and
\item[{\bf (D5)}] $\big| |z|-|z'| \big| \le \mathsf d (z,z') \le \max \{|z|,|z'|,1\}$.
\end{itemize}
Having distance functions at our disposal, we can introduce the concept of  catenary degrees. For an element $a \in H$, the {\it catenary degree} $\mathsf c_{\mathsf d} (a) \in \N_0 \cup \{\infty\}$ (of $a$ with respect to the distance function $\mathsf d$) is the minimal $N \in \N_0 \cup \{\infty\}$ such that for any two factorizations $z, z'$ of $a$ there are factorizations $z=z_0, z_1, \ldots, z_n=z'$ of $a$ such that $\mathsf d (z_{i-1},z_i) \le N$ for all $i \in [1,n]$. The {\it catenary degree} (in distance $\mathsf d$) of $H$ is
\[
\mathsf c_{\mathsf d} (H) = \sup \{ \mathsf c_{\mathsf d} (a) \mid a \in H \} \in \N_0 \cup \{\infty\} \,.
\]
Property {\bf (D5)} easily implies that $\sup \Delta \big( \mathsf L (a) \big) \le \mathsf c_{\mathsf d} (a)$ for every $a \in H$ and hence (see \cite[Lemma 4.2]{Ba-Sm15})
\begin{equation} \label{basic-inequality}
\sup \Delta (H) \le \mathsf c_{\mathsf d} (H) \,.
\end{equation}

First, suppose that $H$ is commutative (and still reduced atomic unit-cancellative). To recall the usual distance function, consider two factorizations $z,z' \in \mathsf Z (H)$. Then there exist $\ell, m,n \in \N_0$ and $u_1, \ldots, u_{\ell}, v_1, \ldots, v_m, w_1, \ldots, w_n \in \mathcal A (H)$ with $\{v_1, \ldots, v_m\} \cap \{w_1, \ldots, w_n\} = \emptyset$ such that
\[
z= u_1 \cdot \ldots \cdot u_{\ell}v_1 \cdot \ldots \cdot v_m \quad \text{and} \quad z' = u_1 \cdot \ldots \cdot u_{\ell}w_1 \cdot \ldots \cdot w_n \,.
\]
Then the map $\mathsf d \colon \sim_H \to \N_0$, defined by $\mathsf d (z,z') = \max \{m,n\}$,  is a distance function satisfying {\bf (D1)} - ({\bf D5}). If $H$ is commutative and cancellative but not half-factorial, then the stronger inequality  $2 + \sup \Delta (H) \le \mathsf c_{\mathsf d} (H)$ holds (\cite[Chapter 1]{Ge-HK06a}).

Now suppose that $H$ is not commutative. Then the above definition of factorizations and of the associated  factorization monoid coincides with the concept of rigid factorizations studied by Baeth and Smertnig  who also introduced the concept of abstract distance functions as given above (\cite[Remark 3.3]{Sm13a}, \cite{Ba-Sm15}, \cite[Chapter 5]{Sm16a}). A well-studied distance is the  Levenshtein distance, which defines the distance between two elements $z,z' \in \mathsf Z (H)$ as  the minimum number of operations needed to transform $z$ into $z'$, where an operation is a substitution, deletion, or insertion of an atom $a \in \mathcal A (H)$ (see \cite[Section 3]{Ba-Sm15} for this and other distance functions).

 Recall that even monoids presented by a single defining relation need not be atomic (e.g, $H = \langle a, b \mid a = bab \rangle$) and that  there are also finitely generated commutative monoids which are not atomic (e.g., \cite[Theorem 3 and Example 6]{Ro-GS-GG04b}). However, every unit-cancellative monoid satisfying the ACC on principal right ideals and on principal left ideals is atomic (see \cite[Proposition 3.1]{Sm13a} for the cancellative case, \cite[Lemma 3.1]{F-G-K-T17} for the commutative unit-cancellative case, and \cite[Theorem 2.6]{Fa-Tr18a} for the general case). Clearly, the ACC on all right ideals is a much stronger condition, as the next lemma shows.

\medskip
\begin{lemma} \label{3.2}
Let $H$ be a reduced submonoid of a group satisfying the \ACC on right ideals.
\begin{enumerate}
\item  Then $H$ satisfies the \ACC on left ideals and $H$ is atomic with finite set of atoms.

\smallskip
\item If $S \subset H$ is a saturated submonoid, then $S$ satisfies the \ACC on right ideals.
\end{enumerate}
\end{lemma}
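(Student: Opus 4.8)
The plan is to handle the two assertions separately, the second being purely formal, and to reduce the first one to a single genuinely substantial point.

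\medskip
\noindent\emph{Part (2).} Let $(I_k)_{k \ge 1}$ be an ascending chain of right ideals of $S$. Each $I_kH$ is a right ideal of $H$ containing $I_k$, the chain $(I_kH)_{k\ge1}$ ascends, so it stabilizes by hypothesis, say $I_kH = I_{k_0}H$ for all $k \ge k_0$. The only place saturation is used is the identity $I_k = (I_kH)\cap S$: since ``$\subseteq$'' is clear, take $x \in (I_kH)\cap S$ and write $x = bh$ with $b \in I_k \subseteq S$ and $h \in H$; from $b \in S$ and $bh = x \in S$, saturation of $S$ in $H$ forces $h \in S$, whence $x = bh \in I_kS \subseteq I_k$. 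Therefore $I_k = (I_kH)\cap S = (I_{k_0}H)\cap S = I_{k_0}$ for all $k \ge k_0$, so $(I_k)_k$ stabilizes.

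\medskip
\noindent\emph{Part (1), finiteness of $\mathcal A(H)$.} We may assume $H \ne \{1\}$. Being a submonoid of a group, $H$ is cancellative, and a one-sided inverse in a group is two-sided; hence the set $H_+ := H\setminus\{1\}$ of non-units is a two-sided ideal, in particular a right ideal. By the ACC on right ideals, $H_+ = u_1H \cup \cdots \cup u_nH$ with $u_i \in H_+$, and I would take this generating set minimal. Then each $u_i$ is an atom: if $u_i = xy$ with $x,y \in H_+$, pick $j$ with $x \in u_jH$; the case $j = i$ gives $x = u_ixz = xyz$ for some $z$, hence $1 = yz$ by left cancellation, impossible since $y \notin H^\times$; the case $j \ne i$ yields $u_iH = xyH \subseteq xH \subseteq u_jH$, contradicting minimality. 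Conversely any atom $u$ lies in some $u_jH$, say $u = u_jz$, and irreducibility of $u$ together with $u_j \notin H^\times$ forces $z = 1$, i.e. $u = u_j$. Thus $\mathcal A(H) = \{u_1,\dots,u_n\}$, and in particular $H_+ = \mathcal A(H)H$: every non-unit has an atom as a left divisor.

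\medskip
\noindent\emph{Part (1), atomicity and the ACC on left ideals — the heart of the matter.} Granting the ACC on left ideals, atomicity is immediate: from $H_+ = \mathcal A(H)H$ every non-unit $a$ factors as $a = ua'$ with $u \in \mathcal A(H)$, and $Ha \subsetneq Ha'$ whenever $a' \ne 1$, so a non-atomic element would generate an infinite strictly ascending chain of principal left ideals. Hence it suffices to prove the ACC on left ideals. As a first step I would upgrade the hypothesis to the right Ore condition: if $aH \cap bH = \emptyset$, then $bH \subsetneq bH\cup abH \subsetneq bH\cup abH\cup a^2bH \subsetneq \cdots$ strictly increases, because $a^{m+1}b \in a^jbH$ with $j\le m$ would, after cancelling $a^j$, produce an element of $aH\cap bH$; this contradicts the ACC on right ideals, so $H$ is right Ore and has a group of right quotients, which we take to be $G$. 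I would then use the anti-automorphism $g\mapsto g^{-1}$ of $G$, which carries right ideals of $H$ bijectively and order-isomorphically to left ideals of the reduced submonoid $H^{-1}=\{h^{-1}:h\in H\}$ of $G$; thus the ACC on left ideals of $H$ is the same as the ACC on right ideals of $H^{-1}$, and the task becomes to transport the chain condition from $H$ to $H^{-1}$ using that the two sit symmetrically inside their common group of quotients $G$. Equivalently, one wants to show directly that for a reduced submonoid of a group the ACC on right ideals already forces every left ideal to be finitely generated. I expect \emph{this transport} to be the genuine obstacle: the finiteness of $\mathcal A(H)$ and the right Ore property are the inputs I would build on, but converting them into control of arbitrary (not merely principal) left ideals is the step where the real work lies.
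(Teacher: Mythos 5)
Your part (2) is correct, and it is the standard contraction argument (the same one behind the paper's reference to \cite[Lemma 4.2.5]{Je-Ok07a}): pass from a chain $(I_k)$ of right ideals of $S$ to the chain $(I_kH)$ in $H$ and recover $I_k=(I_kH)\cap S$, saturation giving the nontrivial inclusion. In part (1), the finiteness of $\mathcal A(H)$ is also fine: the ACC on right ideals makes every right ideal finitely generated, $H\setminus\{1\}$ is a right ideal because one-sided inverses in the ambient group are two-sided, and your minimality argument correctly identifies a minimal generating set of $H\setminus\{1\}$ with $\mathcal A(H)$ (the line ``$x=u_ixz$'' should read $x=u_iz=xyz$, but the cancellation step is the intended one). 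The derivation of the right Ore condition from the ACC on right ideals is correct as well.

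However, the heart of statement (1) --- that the ACC on right ideals forces the ACC on left ideals --- is exactly the step you do not prove, and you say so yourself. Passing to $H^{-1}$ via $g\mapsto g^{-1}$ is only a restatement: the right ideals of $H^{-1}$ are precisely the inverses of the left ideals of $H$, so ``transporting the chain condition'' is the original problem, not a reduction of it; neither the finiteness of $\mathcal A(H)$ nor the Ore property is known to imply it without further work. Moreover, your atomicity argument is conditional on this unproved left ACC and cannot be sidestepped with what you have: the finitely many atoms you produce only give atoms as \emph{left} divisors, hence ascending chains of principal \emph{left} ideals, and, as the paper recalls just before Lemma \ref{3.2}, the standard atomicity criterion requires the ACC on principal left ideals in addition to principal right ideals. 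The paper itself does not reprove this implication but cites \cite[Lemmas 4.1.1 and 4.2.5]{Je-Ok07a}, and the missing right-to-left transfer is precisely the substance of Lemma 4.1.1 there. So your proposal settles part (2) and the finiteness of $\mathcal A(H)$, but leaves the main assertion of part (1) (the left ACC, and with it atomicity) unproven --- a genuine gap.
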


\begin{proof}
See \cite[Lemmas 4.1.1 and 4.2.5]{Je-Ok07a}.
\end{proof}

\medskip
\begin{lemma} \label{3.3}
Let $H$ be a reduced atomic cancellative monoid.
\begin{enumerate}
\item The monoid of relations $\ \sim_H \ \subset \mathsf Z (H) \times \mathsf Z (H)$ is a saturated submonoid and a reduced cancellative \BF-monoid.

\smallskip
\item If the  monoid of relations $\ \sim_H \ $ satisfies the \ACC on right ideals, then $\ \sim_H \ $ is finitely generated.

\smallskip
\item If $H$ is commutative and finitely generated, then the monoid of relations $\ \sim_H \ $ satisfies the \ACC on  ideals and $\ \sim_H \ $ is finitely generated.

\end{enumerate}
\end{lemma}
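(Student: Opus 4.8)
The plan is to prove the three parts in turn, exploiting throughout that $\mathsf Z(H)$ is a free (abelian) monoid --- hence reduced, cancellative, and carrying the length homomorphism $|\cdot|$ --- and that cancellativity of $H$ makes the canonical epimorphism $\pi$ compatible with this structure. For part (1), I would first note that $\sim_H$ is a submonoid of $\mathsf Z(H)\times\mathsf Z(H)$: it contains $(1,1)$, and if $\pi(x)=\pi(y)$ and $\pi(x')=\pi(y')$ then $\pi(xx')=\pi(yy')$, so it is closed under componentwise multiplication; reducedness and cancellativity are then inherited from $\mathsf Z(H)\times\mathsf Z(H)$, which has both. Saturation is the one place where cancellativity of $H$ is essential: if $(s,t)\in\mathsf Z(H)\times\mathsf Z(H)$, $(x,y)\in\,\sim_H$, and $(sx,ty)\in\,\sim_H$, then $\pi(s)\pi(x)=\pi(t)\pi(y)=\pi(t)\pi(x)$, and right cancellation in $H$ gives $\pi(s)=\pi(t)$, i.e.\ $(s,t)\in\,\sim_H$; the case $(xs,yt)\in\,\sim_H$ uses left cancellation. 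For the \BF-property I would use the additive map $\ell\colon\mathsf Z(H)\times\mathsf Z(H)\to\N_0$, $\ell(x,y)=|x|+|y|$, which vanishes exactly on $(1,1)$: an induction on $\ell$ shows every non-unit of $\sim_H$ is a product of atoms, and since each atom contributes at least $1$ to $\ell$, every factorization of $z$ has length at most $\ell(z)$, so $\mathsf L_{\sim_H}(z)\subseteq[1,\ell(z)]$; hence $\sim_H$ is atomic, and a reduced cancellative \BF-monoid.

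For part (2), I would record the general fact that a reduced atomic monoid $M$ with the \ACC on right ideals has only finitely many atoms, and apply it with $M=\,\sim_H$, which by part (1) is reduced and atomic, hence generated by its atoms. Indeed, were $u_1,u_2,\dots$ pairwise distinct atoms of $M$, the right ideals $I_n=u_1M\cup\dots\cup u_nM$ would strictly ascend: $u_{n+1}\in I_n$ would mean $u_{n+1}=u_iz$ for some $i\le n$, forcing $z$ to be a unit (as $u_{n+1}$ is an atom and $u_i$ is not), hence $z=1_M$ and $u_{n+1}=u_i$, a contradiction. Thus $\mathcal A(\sim_H)$ is finite and $\sim_H$ is finitely generated.

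For part (3), assume $H$ commutative and finitely generated. Discarding units, each atom of $H$ equals one of the finitely many generators, so $\mathcal A(H)$ is finite and $\mathsf Z(H)=\mathcal F(\mathcal A(H))$; hence $\mathsf Z(H)\times\mathsf Z(H)$ is a finitely generated free abelian monoid, and by part (1) $\sim_H$ is a \emph{saturated} submonoid of it, equivalently the intersection of $\mathsf Z(H)\times\mathsf Z(H)$ with its own quotient group $\mathsf q(\sim_H)$. By Gordan's Lemma --- a saturated submonoid of a finitely generated free abelian monoid is finitely generated, being the monoid of lattice points of a rational polyhedral cone, cf.\ the treatment of Krull monoids in \cite{Ge-HK06a} --- the monoid $\sim_H$ is finitely generated, and every finitely generated commutative monoid satisfies the \ACC on ideals (Dickson's Lemma). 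The main step is really the saturation computation in part (1): it is short, but it is the crux, being exactly what makes the structure theory of Krull monoids applicable to $\sim_H$; granting it, parts (2) and (3) follow from elementary chain arguments together with the two classical finiteness results just invoked.
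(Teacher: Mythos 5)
Your proof is correct, and its crux --- the saturation of $\ \sim_H\ $ in $\mathsf Z(H)\times\mathsf Z(H)$ via left/right cancellativity of $H$ --- is exactly the paper's argument for part (1); the difference lies in how the finiteness statements are then obtained. The paper outsources these steps: the BF-property of $\ \sim_H\ $ is quoted from \cite[Lemma 2]{Ge16c} (which is in substance your length-function argument with $\ell(x,y)=|x|+|y|$), part (2) invokes Lemma \ref{3.2}.1, a result from \cite{Je-Ok07a} on reduced submonoids of groups with the ACC on right ideals (applicable because $\mathsf Z(H)\times\mathsf Z(H)$ embeds into a group), and in part (3) it first transfers the ACC on ideals from the finitely generated monoid $\mathsf Z(H)\times\mathsf Z(H)$ (citing \cite{HK98}) to its saturated submonoid $\ \sim_H\ $ via Lemma \ref{3.2}.2, and only then deduces finite generation from part (2). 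You argue in the reverse order in part (3): finite generation of $\ \sim_H\ $ comes directly from the identification of a saturated submonoid of a finitely generated free abelian monoid with the intersection of that monoid with the quotient group of the submonoid, hence with the lattice points of a rational polyhedral cone, so Gordan's Lemma applies, and the ACC on ideals then follows from Dickson's Lemma for finitely generated commutative monoids; in part (2) you reprove the needed finiteness of $\mathcal A(\sim_H)$ by exhibiting the strictly ascending chain of right ideals $u_1M\cup\dots\cup u_nM$ rather than citing Lemma \ref{3.2}.1. Both routes rest on the same saturation computation; yours is more self-contained and elementary (it bypasses Lemma \ref{3.2} and the external references entirely), while the paper's is shorter and treats parts (2) and (3) uniformly with the same ACC machinery that it also needs elsewhere.
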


\begin{proof}
1. To show that $\ \sim_H \ $ is a saturated submonoid, let $(x_1, y_1), (x_2, y_2) \in \mathsf Z (H) \times \mathsf Z (H)$ be given. Suppose that $(x_1, y_1) \in \ \sim_H$ and that $(x_1, y_1)(x_2,y_2) \in \ \sim_H$ or $(x_2, y_2)(x_1,y_1) \in \ \sim_H$, say $(x_1, y_1)(x_2,y_2) \in \ \sim_H$. Then  $\pi (x_1)=\pi (y_1) \in H$ and $\pi (x_1) \pi (x_2)= \pi (y_1)\pi (y_2) \in H$. Thus the cancellativity of $H$ implies that $\pi (x_2)=\pi (y_2) \in H$ whence $(x_2, y_2) \in \ \sim_H$.
Since $\mathsf Z (H) \times \mathsf Z (H)$ is a reduced cancellative monoid, $\ \sim_H \ $ is a reduced cancellative  monoid and it is a BF-monoid by \cite[Lemma 2]{Ge16c}.

2. By Lemma \ref{3.2}.1,  the set of atoms $\mathcal A (\sim_H)$ is finite whence \ $\sim_H$ \ is finitely generated.

3. Suppose that $H$ is commutative and finitely generated. Then $\mathsf Z (H)$ is finitely generated by definition and it satisfies the ACC on ideals by \cite[Theorem 3.6]{HK98}. Thus 1. and Lemma \ref{3.2}.2 imply that $\ \sim_H \ $ satisfies the ACC on  ideals, and hence $\ \sim_H \ $ is finitely generated by 2.
\end{proof}

\smallskip
A reduced atomic monoid is said to be {\it finitely presented} if $A = \mathcal A (H)$ is finite and there is a finite set of relations
\[
R \subset \ \sim_H
\]
which generates $\sim_H$ as a congruence relation (this means that one first defines a reflexive, symmetric relation $E \subset \mathsf Z (H) \times \mathsf Z (H)$ by defining $x \sim_E y$ if and only if $x = sut$ and $y = svt$ with $s, t \in \mathsf Z (H)$ and $(u,v) \in R \cup R^{-1} \cup \{(x,x)\mid x \in \mathsf Z (H)\}$, and then one takes the  transitive closure of $E$; for details see \cite[Chapter 1.5]{Cl-Pr64}). Thus, if $\ \sim_H$ is finitely generated as a monoid, then $H$ is finitely presented. As usual we write $H = \langle A \mid R \rangle = \langle A \mid x_1=y_1, \ldots, x_m=y_m \rangle$ if $R = \{(x_1, y_1), \ldots, (x_m,y_m)\} \subset \ \sim_H$.

If an atomic monoid $H$ is presented by homogenous relations (that is, for all $(x,y) \in R$, we have $|x|=|y|$), then $H$ is half-factorial (that is $\Delta (H)= \emptyset$), and conversely (monoids presented by homogenous relations have found wide interest in the study of finitely presented algebras, e.g., \cite{Ce-Je-Ok10, Ok16a}).  Our first result outlines that finitely presented monoids have finite sets of distances.

\medskip
\begin{proposition} \label{3.4}
Let $H$ be a reduced atomic  unit-cancellative and finitely presented monoid with $H \ne \{1\}$, say $H = \langle \mathcal A (H) \mid x_1=y_1, \ldots, x_m=y_m \rangle$, and let $\mathsf d \colon \sim_H \ \to \N_0$ be a distance on $H$.  Then we have
\[
\sup \Delta (H) \le \mathsf c_{\mathsf d} (H) \le \max \{ \mathsf d (x_1 , y_1), \ldots, \mathsf d ( x_m, y_m) \} < \infty \,.
\]
Moreover, if $\mathsf c_{\mathsf d} ( \pi (x_i) ) \ge  \mathsf d (x_i , y_i)$ for every $i \in [1,m]$, then $\mathsf c_{\mathsf d} (H) = \max \{ \mathsf d (x_1 , y_1), \ldots, \mathsf d ( x_m, y_m) \}$.
\end{proposition}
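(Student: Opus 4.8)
The plan is to prove the two displayed inequalities separately and then the "moreover" part. The left inequality $\sup \Delta(H) \le \mathsf c_{\mathsf d}(H)$ is exactly the basic inequality \eqref{basic-inequality} recalled just before the statement, so nothing new is needed there. For the middle inequality, set $N = \max\{\mathsf d(x_1,y_1),\dots,\mathsf d(x_m,y_m)\}$; since $H \ne \{1\}$ there is at least one relation and so $N$ is a well-defined non-negative integer, in particular $N < \infty$. I would show $\mathsf c_{\mathsf d}(a) \le N$ for every $a \in H$. Fix $a$ and two factorizations $z, z' \in \mathsf Z(a)$. Because $H$ is finitely presented by the relations $x_i = y_i$, the pair $(z,z')$ lies in the congruence on $\mathsf Z(H)$ generated by $R = \{(x_i,y_i) : i \in [1,m]\}$; unwinding the definition recalled in the paragraph before the proposition, this means there is a finite chain $z = z_0, z_1, \dots, z_n = z'$ in $\mathsf Z(a)$ such that for each $j \in [1,n]$ we can write $z_{j-1} = s_j u_j t_j$ and $z_j = s_j v_j t_j$ with $s_j, t_j \in \mathsf Z(H)$ and $(u_j, v_j) \in R \cup R^{-1}$. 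Then by properties \textbf{(D4)} and \textbf{(D2)} of a distance, $\mathsf d(z_{j-1}, z_j) = \mathsf d(s_j u_j t_j, s_j v_j t_j) = \mathsf d(u_j, v_j) \le N$ for every $j$. Hence $\mathsf c_{\mathsf d}(a) \le N$, and taking the supremum over $a$ gives $\mathsf c_{\mathsf d}(H) \le N$.

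The one subtlety to watch here is that the chain $z_0,\dots,z_n$ must consist of elements of $\mathsf Z(a)$, i.e. all $z_j$ must map to the same $a$ under $\pi$: this is automatic because each elementary step replaces a subword $u_j$ by $v_j$ with $\pi(u_j)=\pi(v_j)$, so $\pi(z_j)$ is constant along the chain and equals $\pi(z_0)=a$; this also shows $z_j \in \mathsf Z(H)$ has all its entries atoms, which is what we need for it to be a genuine factorization of $a$. Another point is that the definition of "generates as a congruence" allows inserting trivial steps $(x,x)$, but those contribute distance $0$ by \textbf{(D1)}, so they do no harm. I would spell these two observations out in a sentence each.

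For the "moreover" part, assume $\mathsf c_{\mathsf d}(\pi(x_i)) \ge \mathsf d(x_i,y_i)$ for all $i$. Pick $i_0$ with $\mathsf d(x_{i_0},y_{i_0}) = N$. Then $\mathsf c_{\mathsf d}(H) \ge \mathsf c_{\mathsf d}(\pi(x_{i_0})) \ge \mathsf d(x_{i_0},y_{i_0}) = N$, and combined with the inequality $\mathsf c_{\mathsf d}(H) \le N$ already proved this gives equality. The main obstacle in the whole argument is really just being careful about the definition of a finite presentation — translating "$(z,z')$ lies in the congruence generated by $R$" into the concrete elementary-move chain, and checking that each move changes the $\mathsf d$-distance by at most the distance of the single relation used; once that translation is in hand, everything else is a direct application of axioms \textbf{(D1)}, \textbf{(D2)}, \textbf{(D4)} and the monotonicity $\mathsf c_{\mathsf d}(a) \le \mathsf c_{\mathsf d}(H)$.
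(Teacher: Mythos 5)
Your proposal is correct and follows essentially the same route as the paper's proof: the upper bound via a chain of elementary replacements coming from the defining relations, with each step bounded using \textbf{(D4)} (the paper leaves this application implicit), the left inequality via the basic inequality \eqref{basic-inequality}, and the moreover part by combining the hypothesis $\mathsf c_{\mathsf d}(\pi(x_i)) \ge \mathsf d(x_i,y_i)$ with the upper bound already established. One small quibble: $H \ne \{1\}$ does not by itself force $m \ge 1$ (a free monoid on a nonempty set has no relations), but this is immaterial, since if $m=0$ every element factors uniquely and the bound holds trivially.
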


\begin{proof}
It is sufficient to prove the asserted equation for   $ \mathsf c_{\mathsf d} (H)$. Then the upper bound for $\sup \Delta (H)$ follows from \eqref{basic-inequality}.

We choose an element $a \in H$ and two factorizations  $z, z' \in \mathsf Z (H)$  with $\pi (z) = \pi (z') = a$. Since $R = \{(x_1, y_1), \ldots, (x_m,y_m)\}$ generates the congruence relation defining $H$, there are $z=z_0, \ldots, z_n=z' \in \pi^{-1} (a)$ where $z_j$ arises from $z_{j-1}$ by replacing some $x_i$ by some $y_i$ (or conversely) for some $i \in [1,m]$ and all $j \in [1,n]$. Thus $\mathsf d (z_{j-1}, z_j) \le \max \{ \mathsf d (x_1, y_1), \ldots, \mathsf d (x_m,y_m) \}$ and  $\mathsf c_{\mathsf d} (a) \le \max \{ \mathsf d (x_1, y_1), \ldots, \mathsf d (x_m,y_m) \}$ which implies the asserted upper bound for $\mathsf c_{\mathsf d} (H)$..

To verify the moreover statement, let $\pi \colon \mathsf Z (H) \to  H$ denote the factorization homomorphism, and for every $i \in [1,m]$ we set $a_i = \pi (x_i)$. Then $x_i, y_i$ are factorizations of $a_i$,  and the assumption  implies that
\[
\mathsf d (x_i, y_i) \le \mathsf c_{\mathsf d} (a_i) \le \max \{ \mathsf d (x_1 , y_1), \ldots, \mathsf d ( x_m, y_m) \} \,,
\]
and hence we obtain that $\mathsf c_{\mathsf d} (H) = \max \{ \mathsf d (x_1 , y_1), \ldots, \mathsf d ( x_m, y_m) \}$.
\end{proof}

\medskip
\begin{proposition} \label{3.5}
Let $H$ be a reduced atomic unit-cancellative monoid with $H \ne \{1\}$ and suppose that any of the following conditions hold{\rm \,:}
\begin{enumerate}
\item[(a)] There is an $L \in \mathcal L (H)$ such that $\rho (L)=\rho (H) < \infty$.

\smallskip
\item[(b)]   $H$ is cancellative and  its monoid of relations is finitely generated.

\smallskip
\item[(c)] $H$ is commutative and finitely generated.
\end{enumerate}
Then  there  is a bound $M \in \N$ such that $\rho_{k}(H) - \rho_{k-1}(H) \le M$ for all $k \in \N_{\ge 2}$.
\end{proposition}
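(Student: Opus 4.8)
The plan is to establish, in each of the three cases, a uniform upper bound on the differences $\rho_{k}(H) - \rho_{k-1}(H)$; cases (a) and (b) will be reduced to one elementary estimate, and case (c) to the arithmetic theory of commutative monoids. We may assume that $H$ is not half-factorial, for otherwise $\mathcal{L}(H)$ consists of singletons, $\rho_k(H) = k$ for all $k$, and $M = 1$ works. Two easy facts about the sequence $\big(\rho_k(H)\big)_{k \ge 0}$ will be used repeatedly. First, it is superadditive: if $\{k,\ell\} \subset \mathsf{L}(b)$ and $\{k',\ell'\} \subset \mathsf{L}(b')$, then multiplying the corresponding factorizations in $\mathsf{Z}(H)$ shows $\{k+k',\ell+\ell'\} \subset \mathsf{L}(bb')$, whence $\rho_{k+k'}(H) \ge \rho_k(H) + \rho_{k'}(H)$. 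Second, if $\rho(H) < \infty$, then $\rho_k(H) \le k\,\rho(H)$ for every $k$, since any two lengths of a single element have ratio at most $\rho(H)$; in particular all the $\rho_k(H)$ are then finite.

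\textbf{Case (a).} Fix a non-unit $a_0 \in H$ with $L = \mathsf{L}(a_0)$, $m = \min L$, $n = \max L$ and $n/m = \rho(H) < \infty$. Since $qn \in \mathcal{U}_{qm}(H)$ (witnessed by $a_0^{\,q}$) and $\rho_{qm}(H) \le qm\,\rho(H) = qn$, we get $\rho_{qm}(H) = qn$ for all $q \in \N_0$. Now fix $k \ge 1$ and write $k = qm + r$ with $0 \le r \le m-1$. Superadditivity gives $\rho_k(H) \ge \rho_{qm}(H) = qn$, while the linear bound gives $\rho_{k+1}(H) \le (k+1)\rho(H) = qn + (r+1)\rho(H) \le qn + n$. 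Hence $\rho_{k+1}(H) - \rho_k(H) \le n$, so $M := n = \max L$ works.

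\textbf{Case (b).} Here $H$ is cancellative and $\sim_H$ is finitely generated. By Lemma \ref{3.3}.1, $\sim_H$ is a reduced \BF-monoid, hence atomic, and being finitely generated it has only finitely many atoms $(p_1,q_1), \dots, (p_s,q_s)$; here $\pi(p_j) = \pi(q_j) \ne 1_H$, so $|p_j|, |q_j| \ge 1$ (recall $1_H$ has only the empty factorization). Since $\sim_H$ is symmetric, $\rho^{*} := \max_{j} |q_j|/|p_j| \ge 1$ is attained at some $j^{*}$ with $|q_{j^{*}}| \ge |p_{j^{*}}|$. Expressing an arbitrary $(z,z') \in \sim_H$ as a product of the $(p_j,q_j)$ and comparing lengths coordinatewise yields $|z'| \le \rho^{*}|z|$, so $\rho(H) \le \rho^{*} < \infty$; conversely, the powers $(p_{j^{*}}, q_{j^{*}})^{q}$ show that the non-unit $\pi(p_{j^{*}})$ has a set of lengths of elasticity at least $\rho^{*}$. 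Hence $\rho\big(\mathsf{L}(\pi(p_{j^{*}}))\big) = \rho^{*} = \rho(H)$, that is, condition (a) holds, and the claim follows from the case already treated.

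\textbf{Case (c), and the main obstacle.} Here $H$ is commutative and finitely generated; if $H$ is moreover cancellative, then Lemma \ref{3.3}.3 yields that $\sim_H$ is finitely generated, so (b) applies. In general $H$ is finitely presented, so $\sup\Delta(H) < \infty$ by Proposition \ref{3.4}, and the required bound is supplied by the arithmetic theory of finitely generated commutative unit-cancellative monoids in \cite{F-G-K-T17}. I expect this case to be the genuinely hard one: for $n \ge 3$ the commutative monoid $\langle a, b \mid ab = b^{n}\rangle$ has non-accepted elasticity, so there is no reduction to case (a); instead one must show directly that $\rho_k(H) - \rho(H)\,k$ remains bounded, and the crux is to control, uniformly in $k$, how much the maximal factorization length can grow when a single atom is removed from a length-$(k+1)$ factorization realising $\rho_{k+1}(H)$ — which is where the finite presentation of $H$ and Proposition \ref{3.4} come in.
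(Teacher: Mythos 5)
Your proposal is correct, and its load-bearing part coincides with the paper's own argument: in case (b) you, like the paper, use that $\sim_H$ is a reduced atomic (BF-)monoid with finitely many atoms, compare the two coordinates termwise in a product of these atoms to get $\rho(H)=\max_j |q_j|/|p_j|$, conclude that the elasticity is accepted, and reduce to case (a); the paper phrases the same computation as an induction over a finite minimal generating set of $\sim_H$. The genuine difference is in case (a): the paper disposes of it by citing \cite[Proposition 2.8]{F-G-K-T17} for directed families, whereas you give a short self-contained proof via superadditivity of $k \mapsto \rho_k(H)$ together with the linear bound $\rho_k(H) \le k\,\rho(H)$, evaluated along multiples of $m=\min L$; this argument is sound (for $k<m$, i.e.\ $q=0$, the inequality $\rho_k(H)\ge qn$ is trivially true, so $M=\max L$ still works), and it buys independence from the external reference at the cost of a page of elementary estimates. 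For case (c) the paper's proof is itself a one-line citation of \cite[Proposition 3.4]{F-G-K-T17}; you do essentially the same, after correctly folding the commutative cancellative subcase into (b) via Lemma \ref{3.3}.3, so your closing speculation about how the non-cancellative commutative case should be proved is not needed for the argument and is not a gap, though your pointer to \cite{F-G-K-T17} is less precise than the paper's (and note that the finiteness of $\Delta(H)$ from Proposition \ref{3.4} is what Theorem \ref{3.6} needs, not what drives the bound on $\rho_k(H)-\rho_{k-1}(H)$).
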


\noindent
{\it Remark.} Since $\rho_1 (H)=1$, the existence of a bound $M$ as above implies in particular that $\rho_k (H) < \infty$ for all $k \in \N_{\ge 2}$.

\begin{proof}
(a) Since $H$ is a unit-cancellative monoid, the system of sets of lengths $\mathcal L (H)$ is a directed family. Thus if the elasticity is accepted, then the assertion on the growth behavior on the $\rho_k (H)$ follows from an associated statement in the setting of directed families (see \cite[Proposition 2.8]{F-G-K-T17}).

\smallskip
(b) It is sufficient to verify that Condition (a) holds.  By definition, we have
\[
\begin{aligned}
\rho (H) & = \{ \rho (L) \mid L \in \mathcal L (H) \}
  = \{ \sup \mathsf L (a) / \min \mathsf L (a) \mid a \in H \} \\
 & = \{ |x|/|y| \mid (x,y) \in \ \sim_H \} \,.
\end{aligned}
\]
Suppose that $A \subset \ \sim_H$ is a finite minimal generating set (note that $A$ is symmetric, whence $(x,y) \in A$ implies that $(y,x) \in A$). We assert that
\begin{equation} \label{rho-eq}
\rho (H) = \max \{ |x'|/|y'| \mid (x',y') \in A \} \,.
\end{equation}
We show that $(x,y) \le \max \{ |x'|/|y'| \mid (x',y') \in A \}$ for all $(x,y) \in \ \sim_H$ and proceed by induction on $|x|+|y|$. If $(x,y) \in A$, then the assertion holds. Suppose that $(x,y) \notin A$. Then $(x,y)$ is a product of two elements from $\sim_H$ where both are distinct from the identity element, say $(x,y) = (x_1x_2, y_1y_2)$ where $(x_1, y_1), (x_2, y_2) \in \ \sim_H$ and $|x_{\nu}|+|y_{\nu}| < |x|+|y|$ for $\nu \in [1,2]$. Then the induction hypothesis implies that
\[
\frac{|x|}{|y|} = \frac{|x_1|+|x_2|}{|y_1|+|y_2|} \le \max \big\{ \frac{|x_1|}{|y_1|}, \frac{|x_2|}{|y_2|} \big\} \le \max \{ |x'|/|y'| \mid (x',y') \in A \} \,.
\]
Thus \eqref{rho-eq} holds and hence there is an $L \in \mathcal L (H)$ such that $\rho (L)=\rho (H)$.

(c) This follows from \cite[Proposition 3.4]{F-G-K-T17}.
\end{proof}

\medskip
\begin{theorem} \label{3.6}
Let $H$  be a reduced atomic unit-cancellative monoid with $H \ne \{1\}$. If any of the following conditions hold,  then there  is a bound $M \in \N$ such that $\rho_{k}(H) - \rho_{k-1}(H) \le M$ for all $k \in \N_{\ge 2}$ and $\mathcal L (H)$ satisfies the Structure Theorem for Unions.
\begin{enumerate}
\item[(a)] $H$ is finitely presented and there is an $L \in \mathcal L (H)$ such that $\rho (L)=\rho (H) < \infty$.

\smallskip
\item[(b)]   $H$ is cancellative and  its monoid of relations is finitely generated.

\smallskip
\item[(c)] $H$ is commutative and finitely generated.
\end{enumerate}
\end{theorem}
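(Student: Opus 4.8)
The plan is to deduce both assertions from Propositions \ref{3.4} and \ref{3.5} together with the characterization in Lemma \ref{3.1}, after checking that in each of the three cases $H$ is in fact finitely presented. Accordingly, the first thing I would record is that hypotheses (a), (b), (c) each force finite presentation: in case (a) this is assumed outright; in case (b), $H$ is cancellative and its monoid of relations $\sim_H$ is finitely generated as a monoid, so $H$ is finitely presented (as noted after the definition of finitely presented monoids — the diagonal relations $(u,u)$ with $u\in\mathcal A(H)$ are atoms of $\sim_H$, which forces $\mathcal A(H)$ to be finite); in case (c), $H$ is commutative and finitely generated, so $\sim_H$ is finitely generated by Lemma \ref{3.3}.3 and hence $H$ is again finitely presented. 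Simultaneously, in case (a) Proposition \ref{3.5}(a) applies, in case (b) Proposition \ref{3.5}(b) applies, and in case (c) Proposition \ref{3.5}(c) applies, so in every case one obtains a bound $M\in\N$ with $\rho_k(H)-\rho_{k-1}(H)\le M$ for all $k\in\N_{\ge 2}$. This already settles the first claim of the theorem, and since $\rho_1(H)=1$ it also yields $\rho_k(H)\le 1+(k-1)M<\infty$ for every $k$.

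Next I would extract finiteness of the set of distances. Since $H$ is finitely presented and $H\ne\{1\}$, Proposition \ref{3.4}, applied to any distance function on $H$ (the canonical distance if $H$ is commutative, the Levenshtein distance otherwise), gives $\sup\Delta(H)<\infty$, so $\Delta(H)$ is a finite subset of $\N$. If $\Delta(H)=\emptyset$, then $H$ is half-factorial, so $\mathsf L(a)$ is a singleton for every $a\in H$ and $\mathcal U_k(H)=\{k\}$ for every $k\in\N$; each such set is an AAP with difference $1$ and bound $0$, so the Structure Theorem for Unions holds trivially. So the interesting case is $\Delta(H)\ne\emptyset$, and there I would set $\mathcal L=\mathcal L(H)$ and $d=\min\Delta(\mathcal L)$.

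Now I would invoke Lemma \ref{3.1}. Because $H$ is unit-cancellative and $H\ne\{1\}=H^{\times}$, the family $\mathcal L$ is directed; by the previous paragraph $\Delta(\mathcal L)$ is finite and nonempty; and since $d\in\Delta(\mathcal L)$ there are $L\in\mathcal L$ and $\ell\in L\subset\N$ with $[\ell,\ell+d]\cap L=\{\ell,\ell+d\}$, so $\{\ell,\ell+d\}\subset L$. Hence Lemma \ref{3.1} applies, $q=\frac{1}{d}\max\Delta(\mathcal L)\in\N_0$, and it remains only to verify Condition (b) of that lemma. But the bound $\rho_k(H)-\rho_{k-1}(H)\le M$ for all $k\ge 2$, combined with $\rho_k(H)<\infty$ for all $k$, implies Condition (b) of Lemma \ref{3.1} by the discussion following that lemma. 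Therefore $\mathcal L(H)$ satisfies the Structure Theorem for Unions.

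I do not expect a genuine obstacle here: the substantive content — finiteness of $\Delta(H)$ for finitely presented $H$ and the growth bound on the $\rho_k(H)$ — is already carried by Propositions \ref{3.4} and \ref{3.5}, so the remaining work is bookkeeping. The only points that need a little care are matching each hypothesis to the correct case of Proposition \ref{3.5}, verifying that cases (b) and (c) automatically yield a finitely presented monoid (so that Proposition \ref{3.4} is available), and treating the half-factorial case separately, since Lemma \ref{3.1} presupposes $\Delta(\mathcal L)$ to be nonempty.
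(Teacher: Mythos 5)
Your overall route is exactly the paper's: reduce everything to Proposition \ref{3.4} (finiteness of $\Delta(H)$ for finitely presented $H$), Proposition \ref{3.5} (the bound $\rho_k(H)-\rho_{k-1}(H)\le M$), and then Condition (b) of Lemma \ref{3.1} via the discussion following that lemma; your separate treatment of the half-factorial case and your argument that the diagonal atoms $(u,u)$ force $\mathcal A(H)$ to be finite in case (b) are both fine. There is, however, one step whose justification fails as written: in case (c) you derive finite presentation from Lemma \ref{3.3}.3, but that lemma is stated (and proved) for reduced atomic \emph{cancellative} monoids --- its proof uses cancellativity to show that $\sim_H$ is saturated in $\mathsf Z(H)\times\mathsf Z(H)$, so that Lemma \ref{3.2} applies. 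In case (c) the monoid $H$ is only assumed unit-cancellative, so Lemma \ref{3.3}.3 is not available; indeed the paper itself points this out in Remark \ref{3.7}.3, where the implication (c) $\Rightarrow$ (b) is asserted only under the additional hypothesis that $H$ is cancellative. Note also that what you would be claiming, namely that $\sim_H$ is finitely generated \emph{as a monoid}, is strictly stronger than what is needed.

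The repair is the one the paper uses: by Redei's Theorem every finitely generated commutative monoid is finitely presented, i.e.\ the congruence $\sim_H$ on the finitely generated free abelian monoid $\mathsf Z(H)$ is generated, as a congruence, by finitely many relations (here one only needs that $\mathcal A(H)$ is finite, which follows since $H$ is reduced and any finite generating set must contain all atoms). With that substitution, Proposition \ref{3.4} applies in case (c) as well, and the rest of your argument goes through unchanged and coincides with the paper's proof.
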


\begin{proof}
If Condition (b) holds, then $H$ is finitely presented as observed above and if Condition (c) holds, then $H$ is finitely presented by Redei's Theorem (\cite[Chapter VI.1]{Gr01}). Thus in all three cases the set of distances $\Delta (H)$ is finite by
Proposition \ref{3.4}.  Thus  Condition (b) in Lemma \ref{3.1} holds by Proposition \ref{3.5} and hence $\mathcal L (H)$ satisfies the Structure Theorem for Unions.
\end{proof}

\smallskip
An overview of  commutative monoids and rings satisfying the Structure Theorem for Unions can be found in \cite[Section 3]{F-G-K-T17}. Based on these results it follows that the Structure Theorem for Unions holds for (not necessarily commutative) transfer Krull monoids of finite type \cite[Theorem 13]{Ge16c} (the systems of sets of lengths of such monoids coincide with the corresponding systems of commutative Krull monoids).  Crucial examples of transfer Krull monoids and domains are due to N. Baeth and D. Smertnig \cite{Sm13a, Ba-Ba-Go14, Ba-Sm15, Ba-Je16a, Sm17a}, and for an overview we refer to \cite[Section 4]{Ge16c}.

\smallskip
\begin{remark} \label{3.7}~

\smallskip
1. There is a commutative  Krull monoid $H$ with finite set of distances and finite $k$th elasticities $\rho_k (H)$ for all $k \in \N$ whose system $\mathcal L (H)$ does not satisfy the Structure Theorem for Unions. In this case there is no $M \in \N$ such that $\rho_{k}(H) - \rho_{k-1}(H) \le M$ for all $k \in \N_{\ge 2}$ (\cite[Theorem 4.2]{F-G-K-T17}).

\smallskip
2. A monoid $H$ is called {\it almost commutative} (see \cite[Section 6]{Ba-Sm15}) if being associated  is a congruence relation on $H$ and the associated reduced monoid is commutative.  Thus, if $H$ is almost commutative and $H_{\red}$ is finitely generated, then $\mathcal L (H) = \mathcal L (H_{\red})$ satisfies the Structure Theorem for Unions by Condition (c) of Theorem \ref{3.6}.

A monoid $H$ is said to be {\it normalizing} if $aH = Ha$ for all $a \in H$. Normalizing monoids play an important role in the study of semigroup algebras (see \cite{Je-Ok07a, Ak-Ma16a, Ok16a}) and normalizing Krull monoids are almost commutative (\cite{Ge13a}).

\smallskip
3. Let $H$ be as in Theorem \ref{3.6}, namely a reduced atomic unit-cancellative monoid. We compare Conditions (a), (b), and (c).
As shown in Proposition \ref{3.5}, Condition (b) implies Condition (a), and if $H$ is cancellative, then Condition (c) implies Conditon (b) by Lemma \ref{3.3}.3. However,  none of the reverse implications is true in general. A commutative finitely generated monoid without accepted elasticity can be found in \cite[Remarks 3.11]{F-G-K-T17}.

The monoids $B_n$ discussed in Section \ref{4} satisfy the conclusions of Theorem \ref{3.6} but none of the Conditions (a) - (c). On the other hand, it is easy to see that there are cancellative monoids with a single defining relation, whose sets of distances are finite by Proposition \ref{3.4} but whose $k$th elasticities are infinite for all $k \ge 2$. For example, consider the monoid $H = \langle a, b \mid a^2 = ba^2b \rangle$. Then $H$ is reduced atomic with $\mathcal A (H) = \{a,b\}$, and $H$ is cancellative because it is an Adyan monoid. Clearly, $\rho_2 (H) = \infty$ and hence $\rho_k (H) = \infty$ for all $k \ge 2$.

\smallskip
4. In a forthcoming paper \cite{Ba-Sm18}, Baeth and Smertnig verify the Structure Theorem for Unions for local quaternion orders. Their result and the present Theorem \ref{3.6}  reveal the first non-commutative monoids $H$ for which it could be shown that $\mathcal L (H)$ satisfies the Structure Theorem for Unions  without showing that $\mathcal L (H)=\mathcal L (B)$ for some  commutative monoid $B$ (see also Corollary \ref{4.4}).
\end{remark}

\medskip
\section{Sets of lengths of the monoid $B_{n}= \langle a,b\ |\ ba=b^{n} \rangle$} \label{4}
\medskip

For $n \in \N_0$, consider the monoid
\[
B_{n}=  \langle a,b\ |\ ba=b^{n} \rangle \,.
\]
Then $B_0$ is the bicyclic monoid, and $B_1$ is the submonoid of right units of Warne's 2-dimensional bicyclic monoid.
If $a^{k}b^{m}\in{B_{0}}$, then $a^{k}b^{m}= a^{k}b^{m}ba=a^{k}b^{m}baba=\cdots$, and if $a^{k}b^{m}\in{B_{1}}$ with $m\neq0$, then $a^{k}b^{m}= a^{k}b^{m}a=a^{k}b^{m}aa=\cdots$. Therefore, $B_{0}$ and $B_{1}$ are not BF-monoids.

Suppose that  $n \ge 2$. In this case $B_n$ is a M\"obius monoid (\cite[Secton 2.1]{Schw15a}). Observe that multiplication in $B_{n}$ is given by
$$a^{k}b^{m}\cdot{a^{r}b^{s}}=\left\{\begin{array}{ccl}a^{k+r}b^{s}&\text{if}\quad m=0
 \\a^{k}b^{m+(n-1)r+s}&\text{if}\quad m>0.\end{array}\right.$$
Clearly, the monoid $B_n$ is reduced, unit-cancellative, right cancellative, but not left cancellative, and  atomic with
$\mathcal{A}(B_{n})=\{a,b\}$.

\medskip
\begin{theorem} \label{4.1}
Let $n \in \N_{\ge 2}$.
\begin{enumerate}
\item If $\mathsf d$ denotes the Levenshtein distance, then $\mathsf c_{\mathsf d} (B_n) = n-1$, and $B_n$ is half-factorial if and only if $n=2$.

\smallskip
\item For every $k,m \in \N_0$ we have
\[
\mathsf L (a^kb^m) = k+m-q_{m,n}(n-2) + (n-2) \cdot [0,q_{m,n}] \,,
\]
where
\[
q_{m,n}=
\begin{cases}
\big\lfloor \frac{m}{n-1} \big\rfloor   & \text{if} \quad  (n-1) \nmid m \,, \\
\frac{m}{n-1}-1 & \text{if}\quad (n-1) \mid m \ \text{and}\ m \neq 0 \,, \\
0                & \text{if}\quad m=0 \,.
\end{cases}
\]
\end{enumerate}
\end{theorem}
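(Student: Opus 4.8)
The plan is to first pin down all factorizations of the normal form elements $a^kb^m$ combinatorially, and then to read off both assertions.

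\emph{Shape of $\pi(w)$.} The first step is to describe $\pi(w)$ for an arbitrary word $w\in\mathsf Z(B_n)=\mathcal F^*(\{a,b\})$. If $w$ contains no $b$, then $\pi(w)=a^{|w|}$. If $w$ contains at least one $b$, write $k_0$ for the number of $a$'s that precede the first $b$ of $w$ and $p$ for the number of $a$'s that follow it, so $|w|_a=k_0+p$ and $|w|_b\ge 1$. The multiplication law specialises to $b^ja=b^{j+(n-1)}$ and $b^jb=b^{j+1}$ for $j\ge 1$, and a short induction along the alternating-block decomposition of the suffix of $w$ after its first $b$ yields
\[
\pi(w)=a^{k_0}\,b^{\,|w|_b+(n-1)p}\,.
\]

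\emph{Factorizations and lengths.} From this, $\mathsf L(a^k)=\{k\}$, and for $m\ge 1$ the words $w$ with $\pi(w)=a^kb^m$ are exactly those with $k$ leading $a$'s, with some $p\ge 0$ further $a$'s, and with exactly $m-(n-1)p$ occurrences of $b$, subject to $m-(n-1)p\ge 1$; each admissible $p$ is attained, e.g.\ by $w=a^kb^{\,m-(n-1)p}a^p$, and the corresponding length is $k+(m-(n-1)p)+p=k+m-(n-2)p$. Hence
\[
\mathsf L(a^kb^m)=\bigl\{\,k+m-(n-2)p\ :\ 0\le p\le\lfloor\tfrac{m-1}{n-1}\rfloor\,\bigr\}\,.
\]
A short case distinction on whether $(n-1)\mid m$ identifies $\lfloor\tfrac{m-1}{n-1}\rfloor$ with $q_{m,n}$ (with $q_{0,n}=0$ absorbing the case $m=0$, and the unit $1=a^0b^0$ via the convention $\mathsf L(1)=\{0\}$); reindexing by $p=q_{m,n}-j$ then rewrites the displayed set as $k+m-q_{m,n}(n-2)+(n-2)\cdot[0,q_{m,n}]$, which is statement (2).

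\emph{Half-factoriality and catenary degree.} If $n=2$, statement (2) gives $\mathsf L(a^kb^m)=\{k+m\}$, so $B_2$ is half-factorial; equivalently, the relation $ba=b^2$ is homogeneous. If $n\ge 3$, then $b^n$ has, by the second step with $k=0$ and $m=n$, exactly the two factorizations $b^n$ and $ba$, so $\{2,n\}\subseteq\mathsf L(b^n)$ and $B_n$ is not half-factorial. For the catenary degree, $B_n$ is finitely presented by the single relation $(x_1,y_1)=(ba,b^n)$, so Proposition \ref{3.4} gives $\mathsf c_{\mathsf d}(B_n)\le\mathsf d(ba,b^n)$; for the Levenshtein distance one verifies $\mathsf d(ba,b^n)=n-1$, since the length defect forces at least $n-2$ deletions, $n-2$ deletions alone can only turn $b^n$ into $bb\ne ba$ so that a further operation is needed, and $n-1$ operations plainly suffice. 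As $\pi(x_1)=b^n$ has only the two factorizations just found, $\mathsf c_{\mathsf d}(b^n)=\mathsf d(ba,b^n)=n-1=\mathsf d(x_1,y_1)$, and the ``moreover'' clause of Proposition \ref{3.4} gives $\mathsf c_{\mathsf d}(B_n)=n-1$.

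\emph{Expected main obstacle.} The only genuine work is the first step together with the realisability claim in the second: one must show that the single invariant of a word under $\pi$ is the triple (number of leading $a$'s, total number of $b$'s, number of $a$'s after the first $b$) — the finer block structure being immaterial — and that conversely every admissible triple occurs. Once this factorization normal form is secured, the remaining parts reduce to elementary floor-function arithmetic and a three-line Levenshtein estimate.
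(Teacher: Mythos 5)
Your proof is correct, but it reaches part (2) by a genuinely different route than the paper. You compute, for an arbitrary word $w\in\mathcal F^*(\{a,b\})$, its normal form $\pi(w)=a^{k_0}b^{\,|w|_b+(n-1)p}$ (with $k_0$ the number of leading $a$'s and $p$ the number of $a$'s after the first $b$), so you obtain the entire fiber $\pi^{-1}(a^kb^m)$ at once and read off $\mathsf L(a^kb^m)=\{k+m-(n-2)p \mid 0\le p\le\lfloor (m-1)/(n-1)\rfloor\}$, identifying $\lfloor (m-1)/(n-1)\rfloor$ with $q_{m,n}$ by a direct case distinction. The paper instead proves the inclusion ``$\supset$'' by explicitly rewriting $a^kb^m=a^kb^{m-j(n-1)}a^j$ for $j\le q_{m,n}$, where $q_{m,n}$ is introduced via an iterated division scheme whose closed form is computed afterwards, and obtains ``$\subset$'' by a squeeze argument: one application of the defining relation changes a length by exactly $n-2$, whence $\Delta\big(\mathsf L(a^kb^m)\big)\subset\{n-2\}$. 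Your fiber computation makes both inclusions and the extremal lengths immediate and avoids the division scheme, at the mild cost of leaning explicitly on uniqueness of the normal forms $a^kb^m$ -- which the paper also takes for granted through its multiplication table. For part (1) both arguments run through Proposition~\ref{3.4} applied to the single relation $(ba,b^n)$; you additionally spell out $\mathsf d(ba,b^n)=n-1$ and verify the hypothesis of the ``moreover'' clause via $\mathsf Z(b^n)=\{b^n,\,ba\}$, details the paper leaves implicit, and you deduce non-half-factoriality for $n\ge 3$ from the explicit set $\mathsf L(b^n)=\{2,n\}$ rather than from the paper's homogeneity observation.
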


\begin{proof}
Since $\mathsf d (ab, b^n) = n-1$, Proposition \ref{3.4} implies that $\mathsf c_{\mathsf d} (B_n) = n-1$.

Let $k,m \in \N_0$. If $m=0$, then $\mathsf L (a^k) = \{k\}$  and hence the claim holds. Suppose that $m>0$. To begin with, consider two elements $u, v \in \mathcal F^* (\{a,b\})$.  If $v$ is directly derivable from $u$ (that is $v=w_{1}b^{n}w_{2}$ and $u=w_{1}baw_{2}$ or $v=w_{1}baw_{2}$ and $u=w_{1}b^{n}w_{2}$), then $|v|=|u|\pm(n-2)$. This shows that $\Delta \big( \mathsf L (a^kb^m) \big) \subset \{n-2\}$, and hence $B_n$ is half-factorial if and only if $n=2$.

For integers $m$ and $n$ ($m\ge 0$, $n\ge 2$), we define $q_0, q_1, \ldots$ and $r_0, r_1, \ldots$ as quotients and remainders in the following sequence of divisions (with $q_i$ being the first quotient which vanishes):
\begin{itemize}
\item[ ] $m=nq_{0}+r_{0}$\ \ \ ($r_{0}<n;\ \ q_{0}\neq0$)
\item[ ] $q_{0}+r_{0}=nq_{1}+r_{1}$\ \ \ ($r_{1}<n;\ \ q_{1}\neq0$)
\item[ ] $q_{1}+r_{1}=nq_{2}+r_{2}$\ \ \ ($r_{2}<n;\ \ q_{2}\neq0$)
\item[ ] \ \ \ .......................
\item[ ] $q_{i-1}+r_{i-1}=n\cdot0+r_{i}$\ \ \ ($r_{i}<n;\ \ 0=q_{i}$) \,,
\end{itemize}
and we set $q_{m,n} = q_0+q_1+ \ldots + q_i$. If $m \ge n$, then
\[
\begin{aligned}
a^{k}b^{m} & = a^{k}b^{m-(n-1)}a=a^{k}b^{m-2(n-1)}a^{2}=\cdots=a^{k}b^{m-q_{0}(n-1)}a^{q_{0}}= \ldots \\
 & =a^{k}b^{m-(q_{0}+q_{1})(n-1)}a^{q_{0}+q_{1}}=\cdots=a^{k}b^{m-q_{m,n}(n-1)}a^{q_{m,n}} \,.
\end{aligned}
\]
This shows that $k+m-q_{m,n}(n-2) + (n-2) \cdot [0,q_{m,n}] \subset \mathsf L (a^kb^m)$. Conversely, by construction and the fact that $\Delta \big( \mathsf L (a^kb^m) \big) \subset \{n-2\}$, it follows that any other factorization length of $a^kb^m$ is contained in $k+m-q_{m,n}(n-2) + (n-2) \cdot [0,q_{m,n}]$.

It remains to show that $q_{m,n}$ has the asserted value. By adding terms in the columns in the above sequence of divisions (i.e., in the defining table  of $q_0, q_1, \ldots$ and $r_0, r_1, \ldots$) we see that
\[
q_{m,n} = q_0 + \ldots + q_i = \frac{m-r_i}{n-1} \,.
\]
Note that $m>0$ implies $r_i>0$. If  $(n-1)\mid m$ then $r_{i}=n-1$ whence
$$q_{m,n}=\frac{m}{n-1}-1.$$
If  $(n-1) \nmid m$,  then $0<\frac{r_{i}}{n-1}<1$ and
\[
q_{m,n}=\frac{m}{n-1}-\frac{r_{i}}{n-1}=\Big{\lfloor}\frac{m}{n-1}\Big{\rfloor} \,. \qedhere
\]
\end{proof}

\smallskip
If $n=2$, then $B_n$ is half-factorial whence $\mathcal L (B_n) = \big\{ \{k\} \mid k \in \N_0 \big\}$. Thus in our further study of $\mathcal L (B_n)$ we always suppose that $n \ge 3$.

\medskip
\begin{theorem} \label{4.2}
Let $n \in \N_{\ge 3}$. Then
\[
\mathcal L (B_n) = \{ x + (n-2) \cdot [0,q] \mid x,q \in \N_0 \ \text{with} \ x=q=0 \ \text{or} \ x>q \} \,.
\]
\end{theorem}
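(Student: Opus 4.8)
The plan is to read everything off Theorem~\ref{4.1}. Since the relation $ba=b^{n}$ allows one to rewrite every word in $a,b$ in the normal form $a^{k}b^{m}$, the elements of $B_{n}$ are precisely the $a^{k}b^{m}$ with $k,m\in\N_{0}$, so
\[
\mathcal L (B_n) = \{ \mathsf L (a^{k}b^{m}) \mid k,m \in \N_{0} \}\,.
\]
By Theorem~\ref{4.1}.2 each such set equals $x+(n-2)\cdot[0,q]$, where $q=q_{m,n}$ and $x=k+m-q_{m,n}(n-2)=\min\mathsf L(a^{k}b^{m})\in\N_{0}$. Hence the assertion reduces to showing that, as $(k,m)$ runs over $\N_{0}^{2}$, the pair $\big(k+m-q_{m,n}(n-2),\,q_{m,n}\big)$ runs exactly over $\{(0,0)\}\cup\{(x,q)\in\N_{0}^{2}\mid x>q\}$, which I would establish by proving the two inclusions separately.

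For ``$\subseteq$'' fix $k,m\in\N_{0}$ and put $q=q_{m,n}$, $x=k+m-q(n-2)$. If $m=0$, then $q=0$ and $x=k$, so indeed $x=q=0$ or $x>q$. If $m>0$, then $x-q=k+m-q_{m,n}(n-1)$, and the data used in the proof of Theorem~\ref{4.1} give $q_{m,n}(n-1)=m-r$ with $r\in[1,n-1]$ (equivalently, one substitutes the two explicit formulas for $q_{m,n}$ according to whether $(n-1)\mid m$); therefore $x-q=k+r\ge1$, so $x>q$.

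For ``$\supseteq$'' take $(x,q)$ with $x=q=0$ or $x>q$. If $q=0$, then $x\in\N_{0}$ and $\mathsf L(a^{x})=\{x\}=x+(n-2)\cdot[0,0]$. If $q\ge1$, then $x\ge q+1$, and I would take the witness $m=q(n-1)+1$ and $k=x-q-1\ge0$. Since $m\equiv1\pmod{n-1}$ and $n-1\ge2$, we have $(n-1)\nmid m$, so Theorem~\ref{4.1}.2 gives $q_{m,n}=\lfloor m/(n-1)\rfloor=q$, and then $\min\mathsf L(a^{k}b^{m})=k+m-q(n-2)=(x-q-1)+(q(n-1)+1)-q(n-2)=x$; hence $\mathsf L(a^{k}b^{m})=x+(n-2)\cdot[0,q]$, as required.

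The argument is short because Theorem~\ref{4.1} already carries the analytic content. The only genuinely delicate point is the choice of witness $(k,m)$ in the ``$\supseteq$'' direction: taking $m=q(n-1)+1$ is precisely what makes $q_{m,n}$ equal to $q$ while keeping $k=x-q-1$ non-negative and placing the minimal factorization length exactly at $x$, and this is where the hypothesis $n\ge3$ (that is, $n-1\ge2$) enters, guaranteeing that $1$ is a nonzero residue modulo $n-1$. Everything else is routine bookkeeping with the closed form for $q_{m,n}$.
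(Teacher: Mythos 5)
Your proof is correct and takes essentially the same route as the paper: both inclusions are read off from Theorem~\ref{4.1}.2, with ``$\subseteq$'' following from $q_{m,n}(n-1)=m-r$, $r\in[1,n-1]$ (the paper phrases this as $\frac{m-n}{n-1}<q_{m,n}<\frac{m}{n-1}$), and ``$\supseteq$'' from an explicit witness $a^{k}b^{m}$. Your choice $m=q(n-1)+1$, $k=x-q-1$ is a special case of the paper's witness (any $k$ with $0<x-q-k<n$) that conveniently avoids the paper's case distinction on whether $(n-1)\mid m$.
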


\begin{proof}
By definition, $\mathsf L (1) = \{0\}$ and this is the only $L \in \mathcal L (B_n) $ with $0 \in L$. First we show that $\mathcal L (B_n)$ is contained in the set on the right hand side. Let $k,m \in \N_0$ with $k+m>0$. By Theorem \ref{4.1}, we have
\[
\mathsf L (a^kb^m) = k+m-q_{m,n}(n-2) + (n-2) \cdot [0, q_{m,n}] \,,
\]
and so we need to  verify that
\[
k+m-q_{m,n}(n-2) > q_{m,n} \,. \tag{$*$}
\]
If $q_{m,n}=0$, then $(*)$ holds. Otherwise, we  have
\[
\frac{m-n}{n-1}<q_{m,n}<\frac{m}{n-1}
\]
which implies that $0 < m - q_{m,n}(n-1) < n$ and thus $(*)$ holds.

Conversely, let $x,q \in \N_0$ with $x>q$. We choose $k \in \N_0$ such that $0 < x-q-k < n$ and set $m = q(n-1)+x-q-k$. If $n-1 \mid m$, then $x-q-k=n-1$ and $q_{m,n}=q$ whence
\[
\begin{aligned}
\mathsf L (a^kb^m) & = k+m-q_{m,n}(n-2)+(n-2) \cdot [0, q_{m,n}] \\
 & = x + (n-2) \cdot [0,q] \,.
\end{aligned}
\]
If $n-1 \nmid m$, then again we obtain that $q_{m,n} = \lfloor \frac{m}{n-1} \rfloor = q$ and hence
\[
\mathsf L (a^kb^m)  = k+m-q_{m,n}(n-2)+(n-2) \cdot [0, q_{m,n}]  = x + (n-2) \cdot [0,q] \,. \qedhere
\]
\end{proof}

\medskip
\begin{corollary} \label{4.3}
Let $n \in \N_{\ge 3}$.
\begin{enumerate}
\item $\rho (B_n) = n-1 > \rho (L) $ for every $L \in \mathcal L (B_n)$.

\smallskip
\item For every $\ell \ge 2$ we have
      \[
      \mathcal U_{\ell} (B_n) = \ell - q_{\ell,n} (n-2) + (n-2)\cdot [0, q_{\ell,n}+\ell-1] \,.
      \]
\end{enumerate}
\end{corollary}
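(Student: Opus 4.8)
Both statements are consequences of the explicit description $\mathcal L (B_n) = \{x + (n-2)\cdot[0,q] \mid x, q \in \N_0,\ x = q = 0 \text{ or } x > q\}$ from Theorem \ref{4.2}; the plan is to do all the bookkeeping in terms of the parameters $x$ and $q$.

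\emph{Part (1).} I would first write out $\rho (L)$ for a general $L = x + (n-2)\cdot[0,q] \in \mathcal L (B_n)$. If $q = 0$ then $L$ is $\{0\}$ or a singleton in $\N$, so $\rho (L) = 1$; if $q \ge 1$ then $\min L = x$ and $\max L = x + q(n-2)$, so $\rho (L) = 1 + q(n-2)/x$, and the constraint $x \ge q + 1$ yields $\rho (L) < 1 + (n-2) = n - 1$. This proves $\rho (L) < n-1$ for every $L \in \mathcal L (B_n)$, hence $\rho (B_n) \le n-1$. For the reverse inequality I would exhibit, for each $q \ge 1$, the set $L_q = (q+1) + (n-2)\cdot[0,q]$, which lies in $\mathcal L (B_n)$ by Theorem \ref{4.2} (concretely $L_q = \mathsf L(b^{q(n-1)+1})$ by Theorem \ref{4.1}) and satisfies $\rho (L_q) = \frac{q(n-1)+1}{q+1} \to n-1$. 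Thus $\rho (B_n) = n-1$, with the supremum unattained.

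\emph{Part (2).} Fix $\ell \ge 2$ and determine precisely which $L = x + (n-2)\cdot[0,q] \in \mathcal L (B_n)$ contain $\ell$: membership forces $x = x_j := \ell - (n-2)j$ for some $j \in [0,q]$, and the condition $x > q \ge j$ then reads $j(n-1) < \ell$; a short case distinction on whether $(n-1) \mid \ell$ identifies $\{j \in \N_0 : j(n-1) < \ell\}$ with $[0, q_{\ell,n}]$. Therefore the sets of lengths containing $\ell$ are exactly the $x_j + (n-2)\cdot[0,q]$ with $0 \le j \le q_{\ell,n}$ and $j \le q \le x_j - 1$, and taking $q = x_j - 1$ (the inclusion-largest choice) for each $j$ gives
\[
\mathcal U_\ell (B_n) = \bigcup_{j=0}^{q_{\ell,n}} \big( x_j + (n-2)\cdot[0, x_j - 1] \big) \,.
\]
Each term is an arithmetical progression with difference $n-2$ inside $\ell + (n-2)\Z$, the $j$-th one running from $x_j$ to $x_j(n-1) - (n-2)$; the term at $j = 0$ ends at $\ell(n-1) - (n-2)$, and the term at $j = q_{\ell,n}$ starts at $\ell - q_{\ell,n}(n-2)$, a positive integer by inequality $(*)$ in the proof of Theorem \ref{4.2}. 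Finally I would check that consecutive terms overlap: the $(j+1)$-st reaches $x_{j+1}(n-1) - (n-2) \ge x_j - (n-2)$ exactly when $x_{j+1} \ge 1$, which holds because $x_{j+1} \ge x_{q_{\ell,n}} \ge 1$ for $j+1 \le q_{\ell,n}$. Hence the union telescopes to the single arithmetical progression from $\ell - q_{\ell,n}(n-2)$ to $\ell(n-1)-(n-2)$ with difference $n-2$, that is, $\ell - q_{\ell,n}(n-2) + (n-2)\cdot[0, q_{\ell,n} + \ell - 1]$.

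The main obstacle is the cluster of small, slightly fiddly computations in part (2): matching the inequality $j(n-1) < \ell$ with the piecewise definition of $q_{\ell,n}$, and verifying the overlap of consecutive progressions so that $\mathcal U_\ell (B_n)$ comes out as a genuine arithmetical progression rather than a union of gapped pieces. Part (1) is routine once the elasticities $\rho(L)$ are written down explicitly.
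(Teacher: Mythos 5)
Your proposal is correct and follows essentially the same route as the paper: both parts rest on the explicit description of $\mathcal L (B_n)$ from Theorem \ref{4.2}, with part (1) being the same supremum computation over $\frac{x+(n-2)q}{x}$ and part (2) identifying $\mathcal U_\ell (B_n)$ as an arithmetical progression with difference $n-2$ and computing its endpoints, where the maximal admissible $j$ (your $j(n-1)<\ell$) is exactly the paper's condition $y<\ell-y(n-2)$ giving $y=q_{\ell,n}$. The only difference is one of detail: you verify explicitly, via the overlap of consecutive progressions, that the union is a single arithmetical progression, a point the paper asserts directly from the fact that all the sets involved are progressions with difference $n-2$ containing the common element $\ell$.
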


\begin{proof}
1. Using the explicit description of $\mathcal L (B_n)$ given in Theorem \ref{4.2}, we infer that
\[
\begin{aligned}
\rho (B_n) & = \sup \{ \max L / \min L \mid \{0\} \ne L \in \mathcal L (B_n) \} \\
 & = \sup \Big\{ \frac{x+(n-2)q}{x} \mid x,q \in \N_0, x>q \Big\} = n-1 > \frac{\max L}{ \min L}
\end{aligned}
\]
for every $L \in \mathcal L (B_n)$.

\smallskip
2. Let $\ell \ge 2$. By Theorem \ref{4.2}, $\mathcal U_{\ell} (B_n)$ is a union of arithmetical progressions having  difference $n-2$ and containing the element $\ell$. Thus $\mathcal U_{\ell} (B_n)$ is an arithmetical progression with difference $n-2$ which contains the element $\ell$. The maximum of $\mathcal U_{\ell} (B_n)$ stems from a set of the form $x + (n-2) \cdot [0,q]$ with $x>q$ and $\ell \in x + (n-2) \cdot [0,q]$ whence $\max \mathcal U_{\ell} (B_n) = \ell + (n-2) ( \ell-1)$. In order to determine $\min \mathcal U_{\ell} (B_n)$ we have to find the maximal $y \in \N$ such that
\[
\ell - y(n-2) +(n-2)\cdot [0,y] = \ell \quad \text{and} \quad y < \ell - y (n-2)
\]
which implies $y < \ell/(n-1)$ and thus $y = q_{\ell, n}$.
\end{proof}

\bigskip
Let $G$ be an additive abelian group and $G_0 \subset G$ a subset. We introduce a commutative Krull monoid having a combinatorial flavor. Because of connections to Additive Combinatorics, the elements $S \in \mathcal F (G_0)$ are called sequences over $G_0$. Let $\sigma \colon \mathcal F (G_0) \to G$ be the homomorphism defined by $\sigma (g) = g$ for every $g \in G_0$. Thus, if $S = g_1 \cdot \ldots \cdot g_{\ell} \in \mathcal F (G_0)$, then $\sigma (S) = g_1 + \ldots + g_{\ell} \in G$ denotes its sum. Then
\[
\mathcal B (G_0) = \{ S \in \mathcal F (G_0) \mid \sigma (S)=0 \} \subset \mathcal F (G_0)
\]
is a submonoid, called the {\it monoid of zero-sum sequences} over $G_0$. Clearly, $\mathcal B (G_0) \subset \mathcal F (G_0)$ is a saturated submonoid whence $\mathcal B (G_0)$ is a commutative Krull monoid (see the definition of Krull monoids given in Section \ref{2}).

We recall the concept of weak transfer homomorphisms (cf. \cite[Definition 3.1]{Ba-Ba-Go14} and \cite[Definition 2.1]{Ba-Sm15}). Let $H$ and $B$ be atomic unit-cancellative monoids. A monoid homomorphism $\theta \colon H \to B$ is called a {\it weak transfer homomorphism} if the following two properties are satisfied:
\begin{itemize}
\item[{\bf (T1)}] $B = B^{\times} \theta (H) B^{\times}$ and $\theta^{-1} (B^{\times})=H^{\times}$.
\item[{\bf (WT2)}] If $a \in H$, $n \in \N$, $v_1, \ldots, v_n \in \mathcal A (B)$ and $\theta (a) = v_1 \cdot \ldots \cdot v_n$, then there exist $u_1, \ldots, u_n \in \mathcal A (H)$ and a permutation $\tau \in \mathfrak S_n$ such that $a = u_1 \cdot \ldots \cdot u_n$ and $\theta (u_i) \in B^{\times} v_{\tau (i)} B^{\times}$ for each $i \in [1,n]$.
\end{itemize}
If $\theta \colon H \to B$ is a weak transfer homomorphism, then it is easy to check that $\theta (\mathcal A (H))=\mathcal A (B)$ and  $\mathcal L (H)=\mathcal L (B)$. An atomic unit-cancellative monoid $H$ is said to be a {\it transfer Krull monoid} if one of the following two equivalent properties is satisfied:
\begin{itemize}
\item[(a)] There is a commutative Krull monoid $B$ and a weak transfer homomorphism $\theta \colon H \to B$.

\item[(b)] There is an abelian group $G$, a subset $G_0 \subset G$, and a weak transfer homomorphism $\theta \colon H \to \mathcal B (G_0)$.
\end{itemize}
In case (b) we say that $H$ is a transfer Krull monoid over $G_0$. Thus by definition, every commutative Krull monoid is transfer Krull but there is an impressive list of transfer Krull monoids which are not commutative Krull (for a survey see  \cite[Section 4]{Ge16c}). Since the class of commutative Krull monoids is huge and since for most classes of rings and monoids only qualitative finiteness or infiniteness results for arithmetical invariants are known but no precise values or explicit descriptions (such as the one given in Theorem \ref{4.2}), we know only for small classes of monoids that they are not transfer Krull and all of them are  commutative (see \cite{Ge-Sc-Zh17b}).

\medskip
\begin{corollary} \label{4.4}
Let $n \in \N_{\ge 3}$.
\begin{enumerate}
\item For every reduced atomic cancellative monoid $H$, whose monoid of relations is finitely generated, we have $\mathcal L (H) \ne \mathcal L (B_n)$.

\smallskip
\item The monoid $B_n$ is not a transfer Krull monoid.

\end{enumerate}
\end{corollary}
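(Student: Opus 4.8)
The plan is to derive both parts of Corollary \ref{4.4} from the explicit description in Theorem \ref{4.2}, together with the structural results of Section \ref{3}. For part (1), I would argue by contradiction: suppose $H$ is a reduced atomic cancellative monoid with finitely generated monoid of relations and $\mathcal L(H) = \mathcal L(B_n)$. By Proposition \ref{3.5}(b), $H$ has accepted elasticity, so there is some $L \in \mathcal L(H)$ with $\rho(L) = \rho(H) < \infty$. Since $\mathcal L(H) = \mathcal L(B_n)$, we have $\rho(H) = \rho(B_n) = n-1$ by Corollary \ref{4.3}(1), and hence some $L \in \mathcal L(B_n)$ satisfies $\rho(L) = n-1$. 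But Corollary \ref{4.3}(1) states precisely that $\rho(L) < n-1$ for every $L \in \mathcal L(B_n)$, a contradiction. Thus no such $H$ exists.

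For part (2), I would similarly argue by contradiction: suppose $B_n$ is transfer Krull, so there is a commutative Krull monoid $B$ and a weak transfer homomorphism $\theta\colon B_n \to B$, whence $\mathcal L(B_n) = \mathcal L(B)$. A commutative Krull monoid is commutative and cancellative, and I would want to invoke the fact that commutative Krull monoids have accepted elasticity (this is a standard result; for finitely generated Krull monoids it follows from Proposition \ref{3.5}, but in general one cites the literature on Krull monoids, e.g.\ \cite{Ge-HK06a}). That would again force some $L \in \mathcal L(B) = \mathcal L(B_n)$ with $\rho(L) = \rho(B) = \rho(B_n) = n-1$, contradicting Corollary \ref{4.3}(1).

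The main obstacle I anticipate is the accepted-elasticity claim for \emph{arbitrary} (not necessarily finitely generated) commutative Krull monoids in part (2): Proposition \ref{3.5} as stated requires finite generation, so for the general case one must either cite an external result guaranteeing accepted elasticity for all Krull monoids with finite set of distances (or for those relevant here), or reduce to such a case. An alternative, cleaner route that sidesteps elasticity entirely would be to compare the \emph{shape} of sets of lengths directly: in $\mathcal L(B_n)$ every set of the form $x + (n-2)\cdot[0,q]$ satisfies the rigid constraint $x > q$, i.e.\ $\min L > \frac{\max L - \min L}{n-2}$, which is a hereditary property that transfers across weak transfer homomorphisms and which is known to fail for systems of zero-sum sequences $\mathcal B(G_0)$ over any $G_0$ with $\mathcal B(G_0)$ not half-factorial — but verifying this would require recalling structural facts about $\mathcal L(\mathcal B(G_0))$, so the elasticity argument is preferable provided the accepted-elasticity input is available.

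Accordingly, I would present the proof in two short paragraphs mirroring the two statements, citing Corollary \ref{4.3}(1) as the crucial ingredient in both, Proposition \ref{3.5}(b) for part (1), and for part (2) the equivalence of the two definitions of transfer Krull monoid (so it suffices to treat case (a)) together with the accepted-elasticity property of commutative Krull monoids. I would also remark that part (2) does not formally follow from part (1), since a commutative Krull monoid need not have finitely generated monoid of relations, which is exactly why the separate elasticity input for Krull monoids is needed.
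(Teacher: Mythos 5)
Your part (1) is exactly the paper's argument: Proposition \ref{3.5} (Condition (b) implies accepted elasticity) against Corollary \ref{4.3}.1 (the elasticity $n-1$ of $B_n$ is not attained by any $L \in \mathcal L(B_n)$). No issues there.

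Part (2), however, has a genuine gap, and it is precisely the one you flag but do not close. Your plan rests on ``commutative Krull monoids have accepted elasticity,'' which is not available in this generality: for a Krull monoid that is not finitely generated (equivalently, for block monoids $\mathcal B(G_0)$ over infinite subsets $G_0$) the elasticity can be finite yet not accepted, so there is no external theorem you can cite to make the argument go through for an arbitrary transfer-Krull target. The missing idea, and the one the paper uses, is a reduction to the finitely generated case exploiting that $\mathcal A(B_n)=\{a,b\}$ is finite: work with the zero-sum formulation, so $\theta \colon B_n \to \mathcal B(G_0)$ is a weak transfer homomorphism; since weak transfer homomorphisms map atoms onto atoms, $\mathcal A\bigl(\mathcal B(G_0)\bigr)=\theta\bigl(\mathcal A(B_n)\bigr)$ is finite, hence the set $G_1 \subset G_0$ of elements occurring in these finitely many atoms is finite and $\theta(B_n) \subset \mathcal B(G_1)$. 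Then $\mathcal B(G_1)$ is a finitely generated, reduced, atomic, cancellative monoid, so by Lemma \ref{3.3}.3 its monoid of relations is finitely generated, and $\mathcal L(B_n)=\mathcal L\bigl(\mathcal B(G_1)\bigr)$ contradicts your part (1). In other words, your closing remark that part (2) ``does not formally follow from part (1)'' is exactly where the proof has to do work: it does follow, but only after this support-finiteness reduction, which your proposal leaves unproved (and your alternative ``shape of sets of lengths'' route is likewise only sketched, not carried out).
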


\begin{proof}
1. Let $H$ be a reduced atomic cancellative monoid, whose monoid of relations is finitely generated. Then $H$ has accepted elasticity by Proposition \ref{3.5} (indeed, in its proof we showed that Condition (b) implies Condition (a)$\,$). Since $B_n$ does not have accepted elasticity by Corollary \ref{4.3}.1, the claim follows.

\smallskip
2. Assume to the contrary that there is an abelian group $G$, a subset $G_0 \subset G$, and a weak transfer homomorphism $\theta \colon B_n \to \mathcal B (G_0)$. Since the set of atoms $\mathcal A (B_n)$ is finite, the set of atoms
\[
\mathcal A \big( \mathcal B (G_0) \big) = \theta \big( \mathcal A (B_n) \big)
\]
is finite. Thus the set
\[
G_1 := \bigcup_{U \in \mathcal A ( \mathcal B (G_0))} \{ g \in G_0 \mid \mathsf v_g (U)>0 \} \ \subset G_0
\]
is finite and $\theta (B_n) \subset \mathcal B (G_1)$. Thus we have a weak transfer homomorphism $\theta \colon B_n \to \mathcal B (G_1)$ and hence $\mathcal L (B_n) = \mathcal L \big( \mathcal B (G_1) \big)$. Since $G_1$ is finite, $\mathcal B (G_1)$ is finitely generated by \cite[Theorem 3.4.2]{Ge-HK06a}. Clearly, $\mathcal B (G_1)$ is reduced atomic cancellative, and since it is finitely generated, its monoid of relations is finitely generated by Lemma \ref{3.3}.3, a contradiction to  1.
\end{proof}

\medskip
Let $H$ be an atomic unit-cancellative monoid with $H \ne H^{\times}$. Then, as already mentioned,  for every $L_1, L_2 \in \mathcal L (H)$ there is an $L \in \mathcal L (H)$ such that $L_1+L_2 \subset L$ (clearly, if $L_i = \mathsf L (a_i)$ for $i \in [1,2]$, then $\mathsf L (a_1)+\mathsf L (a_2) \subset \mathsf L (a_1a_2)$). We say that $\mathcal L (H)$ is {\it closed under set addition} if for every $L_1, L_2 \in \mathcal L (H)$ we have $L_1+L_2 \in \mathcal L (H)$.  Whereas the first property holds for all atomic unit-cancellative monoids (indeed, all $\mathcal L (H)$ are directed families), the property of being closed under set addition is extremely restrictive. Clearly, if $H$ is a BF-monoid and $\mathcal L (H)$ is closed under set addition, then $\mathcal L (H)$ itself is a reduced atomic unit-cancellative monoid with set addition as operation and with zero  element $\mathsf L (1) = \{0\}$.

The system $\mathcal L (H)$ is closed under set addition in both extremal cases, namely that either
$H$ is half-factorial (in this case we have $\mathcal L (H) = \big\{ \{k\} \mid k \in \N_0 \big\}$) or that every finite subset $L \subset \N_{\ge 2}$ is a set of lengths (this holds true, among others, for transfer Krull monoids   over infinite abelian groups).
If $H$ is a transfer Krull monoid over a finite abelian group $G$, then $\mathcal L (H)$ is closed under set addition if and only if $\exp (G) + \mathsf r (G) \le 5$, where $\exp (G)$ denotes the exponent  and $\mathsf r (G)$ the rank of $G$ (\cite[Theorem 1.1]{Ge-Sc16b}).

In our final result we reveal that   $\mathcal L (B_{n})$ is closed under set addition, and we determine its monoid theoretical structure.

\medskip
\begin{theorem} \label{4.5}
Let $n \in \N_{\ge 3}$.
\begin{enumerate}
\item $\mathcal L (B_n)$  is closed under set addition.

\smallskip
\item There is a monoid isomorphism
      \[
      \begin{aligned}
      \Phi \colon \mathcal L (B_n)  & \to H \\
                  x+(n-2)\cdot [0,q] & \mapsto (x,q) \,,
      \end{aligned}
      \]
      where $H = \{(k,i) \mid k,i \in \N_0, k=i=0 \ \text{or} \ k>i\} \subset (\N_0^2, +)$. Clearly, $H$  is a reduced atomic commutative cancellative half-factorial monoid, and we have $\mathcal A (H) = \{ (k,k-1) \mid k \in \N \}$.
\end{enumerate}
\end{theorem}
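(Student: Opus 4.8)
The plan is to extract everything from the explicit description of $\mathcal L(B_n)$ furnished by Theorem \ref{4.2}, namely
\[
\mathcal L(B_n) = \{\, x+(n-2)\cdot[0,q] \mid x,q\in\N_0,\ x=q=0 \text{ or } x>q \,\},
\]
and to reduce both parts to elementary facts about the submonoid $H\subset(\N_0^2,+)$. The first observation is that, since $n\ge 3$ and hence $n-2\ge 1$, every $L\in\mathcal L(B_n)$ determines the pair $(x,q)$ uniquely, via $x=\min L$ and $q=(\max L-\min L)/(n-2)$ (with $q=0$ exactly when $L$ is a singleton). Thus $\Phi$ is a well-defined bijection from $\mathcal L(B_n)$ onto its index set, which is precisely $H$.

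For part 1, the key computation is the identity $[0,q_1]+[0,q_2]=[0,q_1+q_2]$ for integer intervals anchored at $0$, which gives
\[
\bigl(x_1+(n-2)\cdot[0,q_1]\bigr)+\bigl(x_2+(n-2)\cdot[0,q_2]\bigr) = (x_1+x_2)+(n-2)\cdot[0,q_1+q_2].
\]
To see that the right-hand side lies in $\mathcal L(B_n)$ I would check the admissibility condition ``$x_1+x_2=q_1+q_2=0$ or $x_1+x_2>q_1+q_2$'': if both summands equal $\{0\}$ the first alternative holds, and otherwise at least one factor satisfies $x_i>q_i$ while the other satisfies $x_j\ge q_j$ (whether or not that factor is $\{0\}$), so $x_1+x_2>q_1+q_2$. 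The same inequality shows that $H$ is closed under addition, hence a submonoid of $(\N_0^2,+)$, and therefore commutative, cancellative, and reduced (its only unit $(0,0)$ lies in $H$).

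For part 2, the displayed sumset formula shows at once that $\Phi$ is a monoid homomorphism, since $\Phi(L_1+L_2)=\Phi(L_1)+\Phi(L_2)$ and $\Phi(\{0\})=(0,0)$; combined with the bijectivity noted above, $\Phi$ is an isomorphism. The remaining assertions about $H$ I would settle as follows. First, $H$ is atomic: any non-unit $(k,i)\in H$, so $k>i$, satisfies $(k,i)=(i+1,i)+(k-i-1)\cdot(1,0)$, a product of $k-i$ atoms. Next, a length count: if $(k,i)=\sum_{j=1}^{\ell}(a_j,a_j-1)$ is any factorization into atoms, then comparing coordinates yields $k=\sum_j a_j$ and $i=\sum_j(a_j-1)=k-\ell$, so $\ell=k-i$. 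Hence $\mathsf L_H\bigl((k,i)\bigr)=\{k-i\}$, i.e.\ $H$ is half-factorial, and $(k,i)$ is an atom if and only if $k-i=1$, i.e.\ $\mathcal A(H)=\{(k,k-1)\mid k\in\N\}$.

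The argument is bookkeeping rather than a single difficult step; the one place a hypothesis is genuinely needed is the well-definedness of $\Phi$, which requires $n-2\ge1$, i.e.\ $n\ge3$ (for $n=2$ the monoid is half-factorial and the statement is vacuous). The only thing that demands care is to carry the degenerate case, where $L=\{0\}=0+(n-2)\cdot[0,0]$ corresponds to $(0,0)$, through every step, since it is the neutral element on both sides and must not be counted among the atoms.
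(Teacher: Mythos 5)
Your proposal is correct and follows essentially the same route as the paper: it deduces everything from the explicit description in Theorem \ref{4.2}, uses the sumset identity $\bigl(x_1+(n-2)\cdot[0,q_1]\bigr)+\bigl(x_2+(n-2)\cdot[0,q_2]\bigr)=(x_1+x_2)+(n-2)\cdot[0,q_1+q_2]$ for part 1, and verifies the properties of $H$ (atoms, half-factoriality via the length count $\ell=k-i$) for part 2. You are somewhat more explicit than the paper on the well-definedness and bijectivity of $\Phi$ and on the degenerate summand $\{0\}$, but these are details the paper treats as obvious, not a different argument.
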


\begin{proof}
By Theorem \ref{4.2}, we have $\mathcal L (B_n) = \{ x+(n-2)\cdot [0,q] \mid x, q \in \N_0, \ x=q=0 \ \text{or} \ x >q \}$.

1. Clearly, $\{0\}$ is the zero element of $\mathcal L (B_n)$ and if $x_1, x_2, q_1,q_2 \in \N_0$ with $x_i > q_i$ for $i \in [1,2]$, then
\[
\big( x_1+(n-2)\cdot [0,q_1] \big) + \big( x_2+(n-2)\cdot [0,q_2] \big) = (x_1+x_2) + (n-2) \cdot [0, q_1+q_2] \ \in \mathcal L (B_n) \,.
\]
Thus $\mathcal L (B_n)$ is closed under set addition and hence a monoid with set addition as operation.

2. Obviously, $\Phi$ is an isomorphism. As a submonoid of the free abelian monoid $(\N_0^2, +)$, $H$ is reduced atomic commutative  cancellative, and clearly we have $\mathcal A (H) = \{ (k,k-1) \mid k \in \N \}$. To show that $H$ is half-factorial, consider an equation of the form
\[
(k,i)=(k_{1},k_{1}-1)+\cdots+(k_{r},k_{r}-1)=(k'_{1},k'_{1}-1)+\cdots+(k'_{s},k'_{s}-1) \,,
\]
which implies that $r=k-i=s$.
\end{proof}

\bigskip
\noindent
{\bf Acknowledgement.} We thank the referees for their careful reading.

\providecommand{\bysame}{\leavevmode\hbox to3em{\hrulefill}\thinspace}
\providecommand{\MR}{\relax\ifhmode\unskip\space\fi MR }
% \MRhref is called by the amsart/book/proc definition of \MR.
\providecommand{\MRhref}[2]{%
  \href{http://www.ams.org/mathscinet-getitem?mr=#1}{#2}
}
\providecommand{\href}[2]{#2}

\end{document}